\tikzset{
  bigblue/.style={circle, draw=blue!80,fill=blue!40,thick, inner sep=1.5pt, minimum size=5mm},
  bigred/.style={circle, draw=red!80,fill=red!40,thick, inner sep=1.5pt, minimum size=5mm},
  bigblack/.style={circle, draw=black!100,fill=black!40,thick, inner sep=1.5pt, minimum size=5mm},
  bluevertex/.style={circle, draw=blue!100,fill=blue!100,thick, inner sep=0pt, minimum size=2mm},
  redvertex/.style={circle, draw=red!100,fill=red!100,thick, inner sep=0pt, minimum size=2mm},
  blackvertex/.style={circle, draw=black!100,fill=black!100,thick, inner sep=0pt, minimum size=2mm},  
  whitevertex/.style={circle, draw=black!100,fill=white!100,thick, inner sep=0pt, minimum size=2mm},  
  smallblack/.style={circle, draw=black!100,fill=black!100,thick, inner sep=0pt, minimum size=1mm},
  smallwhite/.style={circle, draw=white!100,fill=white!100,thick, inner sep=0pt, minimum size=1mm} 
}
\title[The Pseudoforest analogue for the Strong Nine Dragon Tree Conjecture is True]{The Pseudoforest analogue for the Strong Nine Dragon Tree Conjecture is True}
\author[Grout]{Logan Grout}
\address[Logan Grout]{Departement of Combinatorics and Optimization, University of Waterloo, Waterloo, ON, Canada}
\email{lcgrout@uwaterloo.ca}
\author[Moore]{Benjamin Moore}
\address[Benjamin Moore]{Department of Combinatorics and Optimization, University of Waterloo, Waterloo, ON, Canada}  
\email{brmoore@uwaterloo.ca}
\thanks{
Both authors thank NSERC for financial support. 
}
\date{}
\newtheorem{thm}[equation]{Theorem}
\newtheorem{lemma}[equation]{Lemma}
\newtheorem{conj}[equation]{Conjecture}
\newtheorem{cor}[equation]{Corollary}
\newtheorem{claim}[equation]{Claim}
\theoremstyle{definition}
\newtheorem{definition}[equation]{Definition}
\newtheorem{obs}[equation]{Observation}
\newtheorem*{ack}{Acknowledgements}
\numberwithin{equation}{section}
\date{}
\begin{document}
\begin{abstract}
We prove that for any positive integers $k$ and $d$, if a graph $G$ has maximum average degree at most $2k + \frac{2d}{d+k+1}$, then $G$ decomposes into $k+1$ pseudoforests $C_{1},\ldots,C_{k+1}$ so that for at least one of the pseudoforests, each connected component has at most $d$ edges.
\end{abstract}

\maketitle
\section{Introduction}

For any graph $G$, a \textit{decomposition} of $G$ is a set of edge disjoint subgraphs of $G$ such that the union of their edges sets is the edge set of the graph.  Graph decompositions are a particularly heavily studied area of graph theory, and one of the most beautiful results about graph decompositions is the Nash-William characterization of when a graph decomposes into $k$ forests. 

\begin{thm}[\cite{Nashwilliams}, Nash-Williams Theorem]
A graph $G$ decomposes into $k$ forests if and only if 
\[ \max_{H \subseteq G, v(H) \geq 2} \frac{e(H)}{v(H)-1} \leq k.\]
\end{thm}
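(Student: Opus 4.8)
We prove the statement in the form in which it is true, namely with the criterion read as $\max_{H\subseteq G,\,v(H)\ge 2}\frac{e(H)}{v(H)-1}\le k$; indeed $K_5$ does not decompose into two forests although its maximum density is $\tfrac52>2$. The plan is to handle the two implications separately. Necessity is a direct counting argument: if $G=F_1\cup\cdots\cup F_k$ with each $F_i$ a forest and $H\subseteq G$ has $v(H)\ge 2$, then each $F_i\cap H$ is a forest on at most $v(H)$ vertices, so $e(F_i\cap H)\le v(H)-1$; summing over $i$ gives $e(H)\le k(v(H)-1)$, i.e.\ $\frac{e(H)}{v(H)-1}\le k$.

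For sufficiency I would invoke the matroid union theorem. Let $M(G)$ be the graphic (cycle) matroid on ground set $E(G)$: a set of edges is independent iff it induces no cycle, and its rank function is $r(F)=v(G)-c(V(G),F)$, where $c(V(G),F)$ is the number of components of the spanning subgraph $(V(G),F)$. A decomposition of $G$ into $k$ forests is precisely a partition of $E(G)$ into $k$ independent sets of $M(G)$, and the matroid union theorem (Nash-Williams / Edmonds) says such a partition exists iff $|F|\le k\cdot r(F)$ for every $F\subseteq E(G)$. So it remains to derive this inequality from the density hypothesis. Fix $F\subseteq E(G)$ and let $H_1,\dots,H_m$ be the connected components of the subgraph on edge set $F$ together with its non-isolated endpoints, each having $v(H_j)\ge 2$. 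Then $r(F)=\sum_j\bigl(v(H_j)-1\bigr)$, and applying the hypothesis to each $H_j$ gives $|F|=\sum_j e(H_j)\le\sum_j k\bigl(v(H_j)-1\bigr)=k\cdot r(F)$, as required.

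The only real content is the matroid union theorem, and if one wants a self-contained argument I would instead run a direct augmenting proof: build up the decomposition one edge at a time, maintaining a partition of the edges seen so far into $k$ forests; when inserting $e=uv$ is blocked, meaning $u$ and $v$ lie in a common tree of every $F_i$, grow the set $\mathcal R$ of edges reachable from $e$ by legal color-exchanges and show that either the search unblocks $e$ or the subgraph $H$ spanned by $\mathcal R\cup\{e\}$ satisfies $e(H)>k(v(H)-1)$, contradicting the hypothesis. The main obstacle is precisely the bookkeeping in this second route: defining the exchange relation uniformly across all $k$ forests and proving that the stuck configuration exposes such a dense subgraph $H$. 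The matroid-union route is attractive exactly because it packages this difficulty into a single citable black box.
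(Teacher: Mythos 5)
Your proposal is correct in substance, but note first that the paper does not prove this statement at all: it is quoted as Nash-Williams's theorem with a citation, so there is no in-paper proof to compare against, and the inequality as printed is indeed a typo --- the correct criterion is $\max_{H\subseteq G,\,v(H)\ge 2}\frac{e(H)}{v(H)-1}\le k$, and your $K_5$ example rightly shows the printed direction cannot be what is meant. Your necessity argument is fine (each forest meets a subgraph $H$ in at most $v(H)-1$ edges, so $e(H)\le k(v(H)-1)$), and your sufficiency derivation from the matroid partition theorem is also correct: for the graphic matroid one has $r(F)=\sum_j\bigl(v(H_j)-1\bigr)$ over the non-trivial components $H_j$ of $(V(G),F)$, each with $v(H_j)\ge 2$, so applying the density hypothesis componentwise gives $|F|\le k\,r(F)$, which is exactly the partition condition. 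The one caveat is that all of the real work is outsourced to the matroid union/partition theorem; this is a legitimate, citable black box (and not circular, since that theorem has independent proofs), but it is a heavier tool than the elementary exchange/orientation arguments this paper favours --- compare its template proof of Hakimi's theorem, which reorients along directed paths and exhibits a too-dense reachable subgraph when stuck. Your second, augmenting route is exactly of that flavour and would be the closer analogue of the paper's methods, but as written it is only a sketch: you do not carry out the colour-exchange bookkeeping or prove that a stuck configuration yields a subgraph with $e(H)>k(v(H)-1)$. Since the matroid-union route is complete, the proposal stands; just be aware that the elementary route is the one you have not actually completed.
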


Here, $H \subseteq G$ is taken to mean $H$ is a subgraph of $G$. We will refer to $\max_{H \subseteq G, v(H) \geq 2} \frac{e(H)}{v(H)-1}$ as the \textit{fractional arboricity} of $G$.  It is not hard to see that the fractional arboricity of a graph might not be integral. For instance, the fractional arboricity of a cycle on $n$ vertices is $\frac{n}{n-1}$.  By the Nash-William characterization, this implies that cycles decompose into two forests, and you cannot decompose a cycle into a single forest.  Additionally, it is easy to see that cycles decompose into a forest and a matching, which is quite a bit stronger than just saying that cycles decompose into two forests. As the fractional arboricity of cycles is much closer to $1$ than to $2$, you might speculate that this is the reason you get this extra structure in the decomposition.  In general,  one might speculate that if the fractional arboricity of a graph is much closer to $k-1$ than to $k$, then not only does $G$ decompose into $k$ forests, but that one of the forests can be assumed to be a matching. Intuitively, this should be believable. If the fractional arboricity is very close to $k-1$ but still greater than $k-1$, then the Nash-William characterization says that you need $k$ forests, but just barely. Such a graph would roughly look like the union of $k-1$ forests, and then a few edges left over. It is reasonable to believe that you can force these left over edges to form a matching. Even more generally, you could conjecture that there exists an $\varepsilon  \in (0,1)$ such that if the fractional arboricity of $G$ is at most $k + \varepsilon$, then $G$ decomposes into $k+1$ forests, such that one of the forests has maximum degree at most $d$. The Nine Dragon Tree Conjecture (now proven by Hongbi Jiang and Daqing Yang \cite{ndt}), proposed by Mickeal Montassier, Patrice Ossona de Mendez, Andr\'e Raspaud, and Xuding Zhu in \cite{montassier} does just this. 

\begin{thm}[\cite{ndt}, Nine Dragon Tree Theorem]
Let $k$ and $d$ be positive integers. If the fractional arboricity of $G$ is at most $k + \frac{d}{k+d+1}$, then $G$ decomposes into $k+1$ forests such that one of the forests has maximum degree at most $d$. 
\end{thm}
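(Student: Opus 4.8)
The plan is to argue by contradiction from a minimal counterexample, extract a decomposition into $k+1$ forests that is \emph{optimal} with respect to a carefully chosen potential, and then run local exchange (augmenting) arguments to produce a vertex set whose edge density exceeds the Nash--Williams threshold $k+\frac{d}{k+d+1}$, contradicting the hypothesis.

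First I would suppose the theorem fails and let $G$ be a counterexample minimizing $|V(G)|+|E(G)|$. Since the fractional arboricity of $G$ is at most $k+\frac{d}{k+d+1}<k+1$, the Nash--Williams Theorem supplies a decomposition of $G$ into $k+1$ forests. Among all such decompositions $(F_1,\dots,F_{k+1})$ I would fix one minimizing, in lexicographic order, first the number of components of $F_{k+1}$ that contain a vertex of $F_{k+1}$-degree greater than $d$ (call these the \emph{bad} components), and then a secondary quantity --- for instance, after rooting each bad component, the sum over its vertices of degree $>d$ of their depths, or the total number of edges lying in bad components. If $F_{k+1}$ has no bad component then $G$ is not a counterexample, so I may assume there is a bad component $R$; minimality of $G$ should also force $G$ to be connected and to have sufficiently large minimum degree.

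Next I would root $R$ and set up the exchange engine: if $uv\in F_i$ for some $i\le k$, with $u$ sitting in a sufficiently ``light'' part of the current configuration and $v$ positioned so that moving $uv$ into $F_{k+1}$ while moving a suitable tree-edge of $F_{k+1}$ out of it keeps all $k+1$ parts forests, then the potential strictly decreases, a contradiction. Iterating every such legal move, I would build a maximal family $\mathcal{C}$ of components of $F_{k+1}$ reachable from $R$ and set $U=\bigcup_{C\in\mathcal{C}}V(C)$. Maximality of $\mathcal{C}$ together with optimality of the decomposition should force every edge of $G$ incident with $U$ and lying in some $F_i$ ($i\le k$) to be ``blocked'', which translates into: (i) for each $i\le k$, $F_i[U]$ is within a small, controlled number of components of being a spanning forest of $G[U]$; and (ii) the components of $F_{k+1}$ inside $U$ carry, in total, as many edges as a spanning forest of $G[U]$ plus the excess $\sum(\deg_{F_{k+1}}(v)-d)$ over the high-degree vertices of the bad components --- a quantity forced to be positive by the presence of $R$.

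Finally I would count: with $U$ as above,
\[
e(G[U]) \;=\; \sum_{i=1}^{k} e(F_i[U]) \;+\; e(F_{k+1}[U]) \;\ge\; k\bigl(|U|-c_1\bigr) \;+\; \bigl(|U|-c_2\bigr) \;+\; \text{(excess)},
\]
where $c_1,c_2$ are the error terms from (i), and the secondary potential constrains the shape of $U$ so that the excess term dominates $c_1,c_2$; the outcome is $e(G[U]) > \bigl(k+\frac{d}{k+d+1}\bigr)(|U|-1)$, contradicting the fractional arboricity bound. The denominator $k+d+1$ appears exactly because a single high-degree vertex supplies up to $d+1$ surplus-bearing incidences against the $k$ ordinary forests. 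I expect the main obstacle to be the middle step: choosing the potential so that it is well-founded and strictly drops under \emph{every} legal exchange, and arranging the reachability set so that the blocking condition on the boundary of $U$ is tight enough to deliver the density bound with no slack. The delicate points are distinguishing edges of the ordinary forests that would create a cycle when moved from those joining two trees, fixing rooting conventions so that ``pushing an edge toward the root'' is well defined, and identifying which secondary potential term actually terminates the augmentation --- historically the crux of this problem.
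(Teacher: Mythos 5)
This paper does not prove the Nine Dragon Tree Theorem; it states it as Theorem~1.2 and cites Jiang and Yang \cite{ndt} for the proof. The result is background for the paper's actual contribution, which is the \emph{pseudoforest} analogue of the Strong Nine Dragon Tree Conjecture, proved via Hakimi's theorem and the orientation framework in which a pseudoforest decomposition is just an orientation with bounded outdegree. So there is no ``paper's own proof'' of this statement for your attempt to be measured against.

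As for the attempt itself: it is a roadmap, not a proof, and you say as much. The high-level template --- take a minimal counterexample, invoke Nash--Williams for a decomposition into $k+1$ forests, pick the decomposition optimal for some lexicographic potential, run exchange moves, explore a reachability set $U$, and show $e(G[U])>\left(k+\tfrac{d}{k+d+1}\right)(|U|-1)$ --- is indeed the shape of the known arguments (and of the paper's proof of the pseudoforest version). But every place where the theorem is actually hard is left open: the potential is ``for instance'' one of several candidates; the exchange move is described by saying that ``if $u$ is sufficiently light and moving $uv$ keeps all parts forests then the potential strictly decreases,'' which is the conclusion one must prove rather than a definition; the blocking conditions (i) and (ii) are asserted to follow from ``maximality together with optimality'' without justification; and the final count introduces error terms $c_1,c_2$ that are then declared to be ``dominated'' by an unquantified ``excess.'' You explicitly flag these as the crux, which is honest, but it means there is no argument here to verify. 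In particular, nothing in the sketch confronts the specific difficulty that distinguishes the forest version from the pseudoforest one: forests have no clean orientation/outdegree characterization, so the cycle-avoidance condition when moving an edge between $F_i$ and $F_{k+1}$ is genuinely constraining and is exactly where the Jiang--Yang bookkeeping does its work. In the pseudoforest setting this difficulty largely disappears, which is precisely why the present paper can give a short proof of its (stronger-in-conclusion, but pseudoforest) analogue; your sketch does not engage with the fact that you are in the harder regime.
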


In \cite{montassier}, it was shown that the fractional arboricity bound is sharp for arbitrarily large graphs. Despite this, Montassier et al. proposed a significant strengthening of the Nine Dragon Tree Theorem, aptly named the Strong Nine Dragon Tree Conjecture.

\begin{conj}[\cite{montassier}, Strong Nine Dragon Tree Conjecture]
\label{sndt}
Let $k$ and $d$ be positive integers. If the fractional arboricity of $G$ is at most $k + \frac{d}{k+d+1}$, then $G$ decomposes into $k+1$ forests such that one of the forests has each connected component containing at most $d$ edges. 
\end{conj}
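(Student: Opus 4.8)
The plan is the standard ``minimal counterexample plus optimal decomposition plus discharging'' strategy that proves the Nine Dragon Tree Theorem, pushed to control component \emph{sizes} rather than degrees. Suppose $G$ is a counterexample minimizing $v(G)+e(G)$. Since the fractional arboricity of $G$ is at most $k+\frac{d}{k+d+1}<k+1$, Nash-Williams' Theorem gives a decomposition of $G$ into $k+1$ forests; among all decompositions $(F_1,\dots,F_{k+1})$ into $k+1$ forests I would choose one that is optimal for a potential measuring how far $F_{k+1}$ is from having every component of size at most $d$. A workable choice is to list the edge-counts of the components of $F_{k+1}$ in nonincreasing order and take the lexicographically smallest such list, breaking ties by minimizing $\sum_{C} e(C)^2$ over the components $C$ of $F_{k+1}$ with $e(C) > d$. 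The force of the tie-breaking is that \emph{no} admissible local modification of $(F_1,\dots,F_{k+1})$ may improve the potential, which rigidly constrains the structure near any oversized component.

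Next I would develop the exchange/augmentation machinery. Fix an oversized component $B$ of $F_{k+1}$ and grow a set $R$ of vertices (and edges) reachable from $B$ by sequences of admissible moves: recolor an edge $uv \in F_{k+1}$ into some $F_i$ provided $u,v$ lie in distinct components of $F_i$, and when this would create a large component in $F_i$, continue by recoloring an edge of that $F_i$-path out of $F_i$; each step must not worsen the potential. The goal is to reach a region that is ``saturated'': every attempted move on its boundary is blocked, forcing every vertex of $R$ to see a prescribed number of edges of each $F_i$, all trapped inside $R$, and forcing the pieces of $F_1,\dots,F_k$ meeting $R$ to be large. The threshold $d$ should enter here, controlling how large these pieces must be before a recoloring ceases to be reversible.

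Finally comes the counting. Give each vertex the budget $k+\frac{d}{k+d+1}$, route charge along the edges of the subgraph $H$ spanned by $R$, and use the saturation conditions to show $\frac{e(H)}{v(H)-1} > k+\frac{d}{k+d+1}$, contradicting the hypothesis applied to $H \subseteq G$ (with the trivial reduction to $v(H) \ge 2$). The oversized component $B$ supplies the surplus $\frac{d}{k+d+1}$ per vertex beyond $k$ because $e(B) \ge d+1$, while the large $F_i$-pieces hanging off $B$ are exactly what keeps $H$ above the threshold even after accounting for their leaves at the boundary of $R$.

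The step I expect to be the real obstacle is making the saturation conditions strong enough, and the region $R$ tame enough, that the final inequality closes with the \emph{exact} constant $k+\frac{d}{k+d+1}$ rather than a weaker one. For pseudoforests this is comparatively forgiving: each component tolerates one extra (cyclic) edge, so the recoloring digraph has many more available moves and the ``blocked'' certificates are clean. For genuine forests, acyclicity is a global constraint, so recoloring $uv$ into $F_i$ can be forbidden for a reason far from $u$ and $v$, and one must simultaneously track how the components of each $F_i$ thread through $R$; getting this bookkeeping to yield the sharp bound is the crux, and is why the forest version is substantially harder than its pseudoforest analogue.
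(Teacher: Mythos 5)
There is a genuine gap here, and it is worth being precise about its nature: the statement you are trying to prove is the Strong Nine Dragon Tree Conjecture itself, which this paper does not prove and which (as the paper notes) is wide open except for special cases ($d=1$ by Yang, $k=1,d=2$ by Kim, Kostochka, West, Wu and Zhu, and $d\le k+1$ by the second author). The paper only establishes the \emph{pseudoforest} analogue, where Hakimi-type orientation arguments replace Nash-Williams and the exchange machinery genuinely closes. Your proposal is a strategy outline, not a proof: every step that would carry the logical weight is left unexecuted. You never specify the admissible moves precisely enough to verify that they preserve acyclicity of all $k+1$ forests simultaneously, you never derive concrete saturation conditions from the minimality of your potential, and the final counting inequality $\frac{e(H)}{v(H)-1} > k+\frac{d}{k+d+1}$ is asserted as a goal rather than deduced. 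Indeed you identify the crux yourself in the last paragraph --- making the blocked-move certificates strong enough to yield the exact constant for forests --- and then stop exactly there. That crux is not a routine technicality; it is the entire difficulty, and the reason the forest conjecture has resisted the very template you describe (which is essentially the Jiang--Yang proof of the weaker Nine Dragon Tree Theorem adapted to component sizes).

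By way of contrast with what the paper actually does: for pseudoforests one works with orientations in which every vertex has outdegree $k$ or $k+1$, colours $k$ outgoing arcs blue and the possible extra arc red, and then the exchange of a blue arc $(x,y)$ with a red edge at $x$ is a purely local move whose legality is certified locally (Observation \ref{flipobservation}); the contradiction comes from the density bound of Observation \ref{finishingobservation} applied to the exploration subgraph, via Lemmas \ref{troublesomedonttroublesome}, \ref{boundingthenumberofsmallchildren} and \ref{counting}. None of this transfers directly to forests, because recolouring an edge into a forest $F_i$ can be blocked by a cycle whose closing path lies far from the edge being moved, so the ``saturated region'' cannot be certified by local outdegree bookkeeping. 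If you want a provable statement along your lines, you would need either to restrict to the known cases or to switch targets to the pseudoforest analogue, where your outline does essentially match the paper's argument.
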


This conjecture is wide open. The $d =1$ case was proven by Yang in \cite{Yangmatching}. The $k=1$ and $d=2$ case was proven by Kim, Kostochka, West, Wu, and Zhu \cite{kostochkaetal}.  Recently, the second author resolved the $d \leq k+1$ case \cite{moore2019approximate}. Moreover, it was shown that the conjecture is true if you replace ``$d$ edges" in the conclusion with a function $f(d,k)$ edges.

This paper will focus on pseudoforest decompositions. Recall, a \textit{pseudoforest} is a graph where each connected component contains at most one cycle. All of the above results or conjectures have pseudoforest analogues. The pseudoforest analogue of Nash-Williams Theorem is Hakimi's Theorem.

\begin{thm}[\cite{hakimithm}, Hakimi's Theorem]
\label{hakimistheorem}
A graph $G$ decomposes into $k$ pseudoforests if and only if the maximum average degree of $G$ is at most $2k$. 
\end{thm}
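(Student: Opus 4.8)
The plan is to prove Hakimi's Theorem by translating pseudoforest decompositions into orientations. First I would record the standard dictionary: $G$ decomposes into $k$ pseudoforests if and only if $G$ has an orientation in which every vertex has out-degree at most $k$. For the forward direction, orient each pseudoforest so that every tree component points towards a chosen root and every unicyclic component has its unique cycle oriented cyclically with all remaining edges pointing towards the cycle; this gives out-degree at most $1$, and superimposing the $k$ orientations yields out-degree at most $k$. Conversely, given an orientation of out-degree at most $k$, at each vertex colour its (at most $k$) out-edges with distinct colours from $\{1,\dots,k\}$; each colour class inherits an orientation of out-degree at most $1$, hence has at most one cycle in each component, so is a pseudoforest.

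Necessity of $\operatorname{mad}(G)\le 2k$ is then immediate: restricting an out-degree-$\le k$ orientation to any $H\subseteq G$ gives $e(H)=\sum_{v\in V(H)}d^+_H(v)\le k\,v(H)$, i.e.\ $e(H)\le k\,v(H)$ for every subgraph $H$, which is exactly $\operatorname{mad}(G)\le 2k$.

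For sufficiency I would use an extremal/exchange argument on orientations. Assume $e(H)\le k\,v(H)$ for every subgraph $H$, and choose, among all orientations of $G$, one $D$ minimising $\Phi(D):=\sum_{v}d^+_D(v)^2$. I claim $D$ has maximum out-degree at most $k$. Suppose not, and pick $v$ with $d^+_D(v)=\Delta\ge k+1$; let $S$ be the set of vertices reachable from $v$ by directed paths in $D$. If some $u\in S$ had $d^+_D(u)\le \Delta-2$, then reversing a directed $v$--$u$ path (which lowers $d^+(v)$ by one, raises $d^+(u)$ by one, and fixes all other out-degrees) would change $\Phi$ by $2\bigl(d^+_D(u)-\Delta\bigr)+2<0$, contradicting minimality; hence every $u\in S$ has $d^+_D(u)\ge\Delta-1\ge k$. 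Since every edge with tail in $S$ also has its head in $S$, the subgraph $H:=G[S]$ satisfies $e(H)=\sum_{u\in S}d^+_D(u)\ge (k+1)+k(|S|-1)=k\,v(H)+1$, contradicting the hypothesis. So $D$ has out-degree at most $k$, and the dictionary above completes the proof.

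The hard part is the sufficiency direction, and in particular choosing the potential function so that a single path-reversal strictly decreases it: this is precisely what forces the extremal orientation to be ``stuck,'' thereby making the reachable set $S$ uniformly high out-degree and producing the over-dense subgraph that contradicts the maximum-average-degree bound. The remaining ingredients — the orientation/pseudoforest correspondence and the necessity count — are routine.
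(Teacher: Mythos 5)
Your proof is correct and follows the same route as the paper: translate to orientations, show necessity by counting out-degrees, and for sufficiency take an extremal orientation, argue no beneficial path reversal exists, and derive a density contradiction on the reachability set. The only cosmetic difference is the potential function --- you minimise $\sum_v d^+(v)^2$, whereas the paper minimises $\sum_v \max\{0,\,d^+(v)-k\}$ --- but both are engineered so that reversing a directed path from a high-out-degree vertex to a sufficiently low-out-degree vertex strictly decreases the potential, and both yield that every vertex in the reachable set has out-degree at least $k$, giving $e(H) \ge k\,v(H)+1$ and the same contradiction.
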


Here, the maximum average degree of a graph $G$ is
\[\text{mad}(G) := \max_{H \subseteq G} \frac{2e(H)}{v(H)}.\]

Genghua Fan, Yan Li, Nine Song, and Daqing Yang \cite{pseudoforest} proved the pseudoforest analogue of the Nine Dragon Tree Theorem.

\begin{thm}[\cite{pseudoforest}]
\label{pseudondt}
Let $k$ and $d$ be positive integers. If $\text{mad}(G) \leq 2k + \frac{2d}{d+k+1}$, then $G$ decomposes into $k+1$ pseudoforests, such that one of the pseudoforests has maximum degree at most $d$. 
\end{thm}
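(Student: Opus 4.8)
The plan is to run the ``minimal counterexample, potential‑minimizing decomposition, discharging'' strategy that underlies the Nine Dragon Tree results, exploiting the fact that a graph is a pseudoforest exactly when it admits an orientation with maximum out‑degree at most $1$. Concretely, suppose the statement fails and let $G$ be a counterexample minimizing $v(G)+e(G)$. Since $\text{mad}(G)\le 2k+\frac{2d}{d+k+1}<2(k+1)$, Hakimi's Theorem gives a decomposition of $G$ into $k+1$ pseudoforests; equivalently, $G$ has an orientation $D$ with $\Delta^{+}(D)\le k+1$ together with a colouring of its edges by $[k+1]$ in which the out‑edges at every vertex receive distinct colours (each colour class then has out‑degree at most $1$ everywhere, hence is a pseudoforest, and conversely every $(k+1)$‑pseudoforest decomposition arises this way). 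Fix colour $k+1$ as the special pseudoforest $C_{k+1}$. Among all such orientation–colouring pairs I would choose one minimizing a potential $\Phi$ that is lexicographically sensitive first to the number of components of $C_{k+1}$ with more than $d$ edges, then to the multiset of sizes of those components listed in decreasing order, and finally to a secondary statistic (a sum of rooted depths, say) used only to make the exploration below well defined and terminating. I would also root each component of each colour class by its orientation: in a tree component the root is the unique sink, and in a unicyclic component the cycle vertices play the role of roots.

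\emph{Exchange analysis.} Assume $C_{k+1}$ has a component $H_{0}$ with $e(H_{0})>d$, and orient its edges toward its root(s). A local move recolours one edge, possibly after reversing it: for instance an out‑edge $u\to v$ of colour $k+1$ may be recoloured $i\le k$ when $u$ has no out‑edge of colour $i$, and an edge may be reversed whenever doing so keeps all out‑degrees at most $k+1$. Starting from $H_{0}$ I would run a breadth‑first exploration in which a vertex becomes \emph{reached} if it lies in $H_{0}$ or becomes incident to an edge that the exploration has certified is ``available'' to be moved into the growing reached region. The only obstructions to moving an edge are the pseudoforest‑specific ones --- the two endpoints already lie in a common unicyclic component of the target colour, or the move would fuse two unicyclic components --- and whenever one occurs the exploration continues from the edges of the offending cycle(s), any one of which could be deleted to make room. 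The minimality of $\Phi$ guarantees that this exploration never closes up into a finite sequence of moves whose net effect reduces the number of large components of $C_{k+1}$ or replaces a large component by strictly smaller pieces; unwinding this yields, for every reached vertex, a sharp local saturation statement --- essentially, that the number of edges at that vertex oriented ``into'' the reached region, summed over all $k+1$ colours, is as large as possible.

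\emph{The count.} Let $R$ be the set of reached vertices and let $F$ be the subgraph of $G$ they span; in the tight version one also appends a controlled set of boundary edges leaving $R$ and assigns them a fractional weight, exactly as in the proof of Theorem~\ref{pseudondt}. Since each colour class restricted to $R$ is a pseudoforest it contributes at most $|R|$ edges, so $e(F)\le(k+1)|R|$ trivially; the saturation statement improves this by the deficiency forced by $H_{0}$ --- because $C_{k+1}$ must retain a component of more than $d$ edges it cannot absorb the surplus edges exposed in the exploration --- and a careful tally of the contribution of each reached vertex upgrades the inequality to $2e(F)>\bigl(2k+\tfrac{2d}{d+k+1}\bigr)v(F)$, i.e.\ $\text{mad}(G)>2k+\tfrac{2d}{d+k+1}$, a contradiction. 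The denominator $d+k+1$ is precisely what makes a component of $C_{k+1}$ of size $d+1$ the break‑even case in this tally, which is why that is the threshold in the hypothesis.

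\emph{Main difficulty.} The hard part, and where the pseudoforest case genuinely departs from the forest case, is the cycle bookkeeping in the exchange step: a single edge move can create a cycle, destroy one, or merge or split unicyclic components, so ``having room to move an edge'' is a non‑monotone condition rather than the simple acyclicity check available for forest decompositions. Engineering the exploration and the potential $\Phi$ together so that (i) no finite improving sequence of moves exists, (ii) every reached vertex is saturated in exactly the quantitative form the final count needs, and (iii) the fractional boundary edges are assigned and summed with no double counting, is the technical core of the argument; once these are in place, the bound on $\text{mad}(G)$ falls out of a routine discharging computation.
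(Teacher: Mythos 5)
Your plan is aimed at the right family of ideas, but note two things. First, this statement is not proven in the present paper at all --- it is cited from Fan, Li, Song, and Yang \cite{pseudoforest}; what the paper actually proves is the strictly stronger Theorem \ref{strongpndt}, by a method that does broadly match your sketch (minimal counterexample, an orientation with all out-degrees in $\{k,k+1\}$, a red-blue colouring, local exchanges, and a density count on an exploration subgraph). Second, the potential you propose to minimize --- keyed to the number and sizes of components of $C_{k+1}$ with more than $d$ edges --- already encodes the Strong Nine Dragon Tree invariant rather than the maximum-degree invariant that the statement asks for; since bounding component size bounds maximum degree this is legitimate, but it is stronger than necessary and correspondingly harder to control, which is exactly why it took a separate paper.

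The genuine gap is precisely what you call ``the technical core'' and then leave open: the cycle bookkeeping. The paper neutralizes it with a device your sketch omits --- it puts ``minimize the number of red cycles'' as the first, highest-priority coordinate of the potential, ahead of the residue and the legal order. That yields Observation \ref{redcyclesaturationlemma}: if a blue arc $(x,y)$ points into an acyclic red component then $x$ itself lies on no red cycle, so every exchange happens inside red trees and never creates a new cycle, and the non-monotonicity you flag simply disappears. Without this (or an equivalent device) your assertion that ``the minimality of $\Phi$ guarantees that this exploration never closes up into a finite improving sequence'' is exactly the thing to be proven, not a given. The final count is also deferred; the paper's tally is not a vertex-by-vertex fractional-boundary discharge but instead assigns each whole small child component to its unique parent, relying on $e_r(\text{parent})+e_r(\text{child}) \geq d$ (Lemma \ref{troublesomedonttroublesome}), at most $k$ small children per component (Lemma \ref{boundingthenumberofsmallchildren}), and the density estimate of Lemma \ref{counting}. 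So the outline is sound, but the ordering trick that keeps exchanges well-behaved and the three lemmas that make the density count close are all missing.
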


Fan et al. also showed the maximum average degree bound is best possible. 

\begin{thm}[\cite{pseudoforest}]
\label{sharpness}
For any positive integers $k$ and $d$, there are arbitrarily large graphs $G$ and an edge $e$ where $\text{mad}(G-e) = 2k + \frac{2d}{k+d+1}$, but $G$ does not decompose into $k+1$ pseudoforests where one of the pseudoforests has maximum degree $d$. 
\end{thm}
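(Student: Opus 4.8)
The final statement is a construction, so the plan is to exhibit an explicit family. First I would isolate the combinatorial core as a single connected \emph{gadget} $G^{\ast}$ together with a marked edge $e$ for which $\mathrm{mad}(G^{\ast}-e)=2k+\frac{2d}{k+d+1}$ and which admits no decomposition into $k+1$ pseudoforests one of which has maximum degree at most $d$. Note that, by Theorem~\ref{pseudondt}, such a $G^{\ast}$ must satisfy $\mathrm{mad}(G^{\ast})>2k+\frac{2d}{k+d+1}$, so the marked edge genuinely carries the failure: $G^{\ast}-e$ sits exactly at the threshold (and so, again by Theorem~\ref{pseudondt}, it does decompose), while $G^{\ast}$ does not. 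Arbitrarily large examples then follow by amplification: either make the gadget scalable by a parameter that may grow, or take the disjoint union of $G^{\ast}$ with arbitrarily large graphs whose maximum average degree equals $2k+\frac{2d}{k+d+1}$, using that the maximum average degree of a disjoint union is the maximum over the components and that a pseudoforest decomposition of a disjoint union restricts to one on each component.

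To design $G^{\ast}$ I would start from the identity
\[ 2k+\frac{2d}{k+d+1}=\frac{2(k+1)(k+d)}{k+d+1},\]
which says that the densest subgraphs permitted in $G^{\ast}-e$ are ``blobs'' of edge-to-vertex ratio exactly $\tfrac{(k+1)(k+d)}{k+d+1}$, for instance on $m(k+d+1)$ vertices with $m(k+1)(k+d)$ edges. I would build $G^{\ast}$ from a hub vertex $v$ carrying $d+1$ \emph{petals}, each petal sharing with the rest of $G^{\ast}$ only the vertex $v$ and each being such a blob, arranged so that its density in excess of $2k$ is ``concentrated at $v$'': precisely, one wants each petal $P$ to have the property that for every pseudoforest subgraph $Q$ of $P$ avoiding $v$, the graph $P-Q$ does not decompose into $k$ pseudoforests. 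Equivalently, in every decomposition of $P$ into $k+1$ pseudoforests, each of the $k+1$ parts receives an edge of $P$ incident to $v$. Granting this, in any decomposition of $G^{\ast}$ into $k+1$ pseudoforests the one we wish to hold to maximum degree $\le d$ picks up a $v$-incident edge from each of the $d+1$ petals, forcing $v$ to degree at least $d+1$ there --- contradiction. The marked edge $e$ is taken inside one petal (say incident to $v$), chosen so that deleting it simultaneously lowers the maximum density to exactly $2k+\frac{2d}{k+d+1}$ and relaxes that petal enough that it no longer forces a $v$-incident edge, leaving only $d$ forced edges at $v$.

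The step I expect to be the main obstacle is reconciling the \emph{global} density requirement with the \emph{local} forcing. Building an over-dense graph that fails to decompose is easy, as is building a sparse graph with a forced high-degree vertex; the difficulty is that the petals must simultaneously (i) have edge-to-vertex ratio exactly $\tfrac{(k+1)(k+d)}{k+d+1}$ once $e$ is deleted, (ii) contain \emph{no} subgraph denser than this, so that nothing pushes $\mathrm{mad}$ above the threshold, and (iii) be rigid enough that the concentration-at-$v$ property holds for \emph{every} pseudoforest decomposition, not just generically. I expect this forces a quite specific petal --- plausibly a balanced blow-up or a near-regular (multi)graph on a multiple of $k+d+1$ vertices with a prescribed small surplus at $v$ --- whose density bound is then checked by a discharging-style argument over all subgraphs. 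Once the petal is pinned down the non-decomposability is short: $\mathrm{mad}(P)>2k$ forces the $(k+1)$-st pseudoforest to meet every petal, the concentration property moves that footprint onto $v$, and $d+1$ petals overflow the degree bound $d$ at $v$.
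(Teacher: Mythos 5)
Your proposal correctly identifies the general shape of the construction used in the literature for this kind of sharpness result --- a hub vertex $v$ carrying $d+1$ dense ``petals,'' each forcing every class of every $(k+1)$-pseudoforest decomposition to take a $v$-incident edge, so that the bounded-degree class overflows at $v$ --- and the closing count at $v$ is fine. However, note that this paper does not prove Theorem~\ref{sharpness}; it quotes the result from Fan, Li, Song, and Yang, so there is no in-paper proof to compare against, and your argument must stand on its own. It does not: the petal, which is the entire mathematical content of the statement, is never exhibited. Writing that it is ``plausibly a balanced blow-up or a near-regular (multi)graph\dots whose density bound is then checked by a discharging-style argument'' names the difficulty rather than solving it.

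The tension you flag between (i) achieving edge-to-vertex ratio exactly $\frac{(k+1)(k+d)}{k+d+1}$ once $e$ is removed, (ii) never exceeding it on any subgraph, and (iii) the forcing property is genuine and is precisely where the work lies. The naive certificate for forcing fails: if $P-v$ is connected and contains a cycle, then $P-v$ contains a pseudoforest $Q$ with $v(P)-1$ edges, and
\[
e(P)-e(Q)\;\approx\;\Bigl(k-1+\tfrac{d}{k+d+1}\Bigr)\,v(P)\;<\;k\,v(P),
\]
so one cannot certify $\text{mad}(P-Q)>2k$ by counting over all of $P-Q$; the surplus density must be protected in some region that no $v$-avoiding pseudoforest can drain. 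Producing an explicit family with this behavior, verifying its maximum average degree on every subgraph, and checking the forcing property against all $v$-avoiding pseudoforests is exactly the ``specific petal'' you concede you have not pinned down. As submitted, this is a proof strategy, not a proof.
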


The main result of this paper is that the pseudoforest analogue of the Strong Nine Dragon Tree Conjecture is true.

\begin{thm}
\label{strongpndt}
Let $k$ and $d$ be positive integers. If $\text{mad}(G) \leq 2k + \frac{2d}{d+k+1}$, then $G$ decomposes into $k+1$ pseudoforests $T_{1},\ldots,T_{k},F$, such that each connected component of $F$ contains at most $d$ edges. 
\end{thm}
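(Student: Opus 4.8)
The plan is to argue by contradiction via an extremal decomposition together with an exchange (augmenting) argument, in the spirit of the proof of the pseudoforest Nine Dragon Tree Theorem (Theorem~\ref{pseudondt}). Suppose the statement fails and let $G$ be a counterexample minimizing $|V(G)|$, and subject to that $|E(G)|$; after a routine reduction we may assume $G$ is connected. Since $2k+\frac{2d}{d+k+1}<2(k+1)$, Hakimi's Theorem gives at least one decomposition of $G$ into $k+1$ pseudoforests. Call a component of a pseudoforest \emph{big} if it has more than $d$ edges. Among all decompositions $(T_1,\dots,T_k,F)$ of $G$ into $k+1$ pseudoforests I would fix one minimizing a potential $\Phi$ recording, in non-increasing order and compared lexicographically, the edge-counts of the big components of $F$ (the precise form --- e.g.\ ``minimize the number of big components, then the total number of their edges, then \dots'' --- will be dictated by what makes the exchange step below go through). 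The goal is to show that then $F$ has no big component, contradicting the choice of $G$. Throughout, orient each pseudoforest so that every vertex has out-degree at most $1$: consistently orient the cycle of each unicyclic component and direct all other edges towards it, and root each tree component and direct its edges towards the root; so a vertex fails to have out-degree exactly $1$ in a pseudoforest only if it is the root of a tree component of it.

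Assume $F$ has a big component $C_0$. The engine is a local exchange: if $v$ has out-edge $e=vw$ in some $T_i$, then $v$ has out-degree $0$ in $T_i-e$, so for a suitable edge $f$ of $F$ incident with $v$ we may move $f$ out of $F$ into $T_i-e$ and move $e$ into $F$. One checks that both results are pseudoforests provided $f$ is chosen so that deleting it from its $F$-component leaves a tree (so reinserting $e$ into $F$ cannot create a second cycle in any component) and $e$ does not join two unicyclic components of $T_i-e$. Taking $v\in V(C_0)$ and $f$ the edge leaving $v$ along $C_0$ away from a fixed centre of $C_0$, this move either strictly decreases $\Phi$ (it splits or shrinks $C_0$) or is \emph{blocked}, meaning $w$ lies in a big component of $F$ or the move is illegal in a way that, by minimality of $\Phi$, pins down structure. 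Starting from $V(C_0)$ and iterating along blocked moves I would grow a maximal set $R\subseteq V(G)$: whenever $v\in R$ and the exchange along $v$'s $T_i$-out-edge $vw$ is blocked because $w$ lies in a (big, or already absorbed) component of $F$, add that whole $F$-component to $R$ and continue.

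From the minimality of $\Phi$ and the maximality of $R$, I would extract two rigidity facts: every component of $F$ meeting $R$ is contained in $R$ and has more than $d$ edges; and for each $i$, every vertex of $R$ has its $T_i$-out-edge inside $R$ except for a number of ``deficient'' vertices (tree roots, and vertices where the move is illegal) that can be charged, at most a bounded amount each, to the big $F$-components inside $R$. Let $H$ be the subgraph of $G$ with vertex set $R$ and edge set all $T_i$-edges ($1\le i\le k$) with both ends in $R$, together with all $F$-edges with both ends in $R$. The $F$-components inside $R$ partition $R$ and each has more than $d$ edges, so $e(F[R])$ is large relative to $|R|$, while $\sum_{i=1}^{k} e(T_i[R])\ge k|R|-(\text{total deficiency})$; because a pseudoforest component with $d+1$ edges has enough edges to pay off the deficiency it is charged, these bounds combine to give $e(H)>\bigl(k+\tfrac{d}{d+k+1}\bigr)|R|$, whence $\mathrm{mad}(G)\ge \tfrac{2e(H)}{|R|}>2k+\tfrac{2d}{d+k+1}$, a contradiction.

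The main obstacle is designing the exchange repertoire and the potential so that ``no improving move exists'' forces exactly this rigidity, and then making the numerology hit the sharp threshold. Two points are genuinely harder than in the forest setting. First, legality: inserting an edge into a pseudoforest can merge two unicyclic components into a subgraph with two cycles, so admissible moves must be restricted --- the out-degree-at-most-$1$ orientations serve as the certificate, and the single cycle of each $F$-component has to be treated separately (a big unicyclic $F$-component behaves slightly differently from a big tree component). Second, the accounting must be arranged so that the deficiency created by the $k$ ``forest-like'' pseudoforests $T_1,\dots,T_k$ is offset precisely by the surplus edges in the big components of $F$; extracting the threshold $2k+\frac{2d}{d+k+1}$ rather than a weaker bound is the delicate part, and I expect it to force the detailed shape of both $\Phi$ and the set of admissible moves. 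The remaining ingredients --- existence of an initial decomposition, the orientations, and the breadth-first construction of $R$ --- are routine.
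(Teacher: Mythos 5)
Your overall template --- extremal counterexample, exchange/augmentation moves, grow a ``reachable'' set, then a density contradiction against the mad bound --- matches the shape of the paper's argument, and the exchange operation you describe is essentially the paper's exchange. But there are genuine gaps, and one of your ``rigidity facts'' is simply false.

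The false claim: you assert that from minimality of $\Phi$ and maximality of $R$ one gets that \emph{every $F$-component meeting $R$ has more than $d$ edges}. This does not hold, and the paper does not try to prove it. The reason is that a move from a vertex of a big component $C_0$ into a small $F$-component $C'$ need not decrease $\Phi$: removing one edge from $C_0$ and attaching $C_0$ to $C'$ via the new red edge can leave a component with at least $d+1$ edges (or even increase the total edges sitting in big components). So you cannot rule out small $F$-components inside your exploration set. The paper's whole point is to \emph{live with} small red components and instead establish two structural constraints on them: (a) if a small (acyclic) component is the child of another red component, their red edge counts sum to at least $d$ (Lemma \ref{troublesomedonttroublesome}), and (b) every red component has at most $k$ small children (Lemma \ref{boundingthenumberofsmallchildren}). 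Neither appears in your sketch, and they are exactly what makes the final density count close.

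A second, related gap is your accounting scheme. The paper defines the exploration subgraph $H$ so that every vertex's \emph{all} $k$ blue out-arcs stay inside $H$; hence there is \emph{no} deficiency at all in the blue edges and $e_b(H)/v(H)=k$ exactly (Observation \ref{finishingobservation}). All the slack is then concentrated on the red side, where the structural lemmas show each non-small component together with its at most $k$ small children still has red density at least $\frac{d}{d+k+1}$ (Lemma \ref{counting}), with strict inequality at the root. Your plan instead grows $R$ only along ``blocked'' moves and then tries to charge the resulting $T_i$-deficiencies to big $F$-components; this is both vaguer and harder, because the deficiency does not cleanly localize, and it is not clear the numerology hits the sharp threshold. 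Closely related: you would need the analogue of Lemma \ref{orientationlemma} (a global orientation with all out-degrees in $\{k,k+1\}$) to get the exact-$k$ blue density; your per-pseudoforest orientations do not give this for free.

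Finally, your potential $\Phi$ (sorted edge-counts of big components, compared lexicographically) is too coarse. The paper needs a finer ``legal order'' of the red components reachable from the root, together with a prior minimization of the number of red cycles, because several of the exchange cases (e.g. Case~2 of Lemma \ref{troublesomedonttroublesome}, where the only witness $w$ coincides with $x$, or the cycle case of Observation \ref{redcyclesaturationlemma}) make progress in the legal order without touching the multiset of big component sizes. Without some such refinement the ``no improving move'' hypothesis does not pin down the structure you need. You were candid that designing $\Phi$ and the move set is ``the delicate part''; that delicate part is exactly where the proposal, as written, breaks.
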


%

By Theorem \ref{sharpness}, the maximum average degree bound given in Theorem \ref{strongpndt} is sharp. Some of the cases of Theorem \ref{strongpndt} were known. As having maximum degree one is the same as having one of the pseudoforests be a matching, Theorem \ref{pseudondt} implies Theorem \ref{strongpndt} when $d=1$. Interestingly, the proof given in \cite{kostochkaetal} of the Strong Nine Dragon Tree Conjecture when $k = 1$ and $d=2$ implies Theorem \ref{strongpndt} when $k=1$ and $d=2$. Prior to our result, all other cases were open. 

As a template for how the proof of Theorem \ref{strongpndt} will proceed, we will give a proof of the non-trivial direction of Hakimi's Theorem. The proof we give appears in \cite{pseudoforest} (and is perhaps the first time it appeared in print, we are not aware of any earlier proofs), and the proof of the Nine Dragon Tree Theorem for pseudoforests in some sense follows this proof of Hakimi's Theorem. However, we feel that our proof more faithfully follows this proof of Hakimi's Theorem, and thus leads to the stronger result while having a shorter proof. 

Before proceeding, we need some definitions. Given a graph $G$, an \textit{orientation} of $G$ is obtained from $E(G)$ by taking each edge $xy$, and replacing $xy$ with exactly one of the arcs $(x,y)$ or $(y,x)$. To reverse the direction of an arc $(x,y)$ is to replace $(x,y)$ with the arc $(y,x)$. For any vertex $v$, let $d(v)$ denote the degree of $v$, and $d^{+}(v)$ denote the outdegree of $v$. A directed path $P$ from $u$ to $v$ is a path $P$ oriented so that $v$ is the only vertex with no outgoing edge.  The next observation is easy and well known.

\begin{obs}
A graph $G$ is a pseudoforest if and only if $G$ admits an orientation where every vertex has outdegree at most one. 
\end{obs}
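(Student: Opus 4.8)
The plan is to prove the two implications separately, using nothing beyond elementary graph theory.

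For the reverse implication, I would start from an orientation of $G$ in which every vertex has outdegree at most one, and show that every connected component $C$ satisfies $e(C) \le v(C)$. Indeed, since each arc has a unique tail, $e(C) = \sum_{v \in V(C)} d^+_C(v) \le \sum_{v \in V(C)} d^+(v) \le v(C)$, where $d^+_C(v)$ denotes the outdegree of $v$ in the subgraph $C$. It then remains to invoke the elementary structural fact that a connected graph with at most as many edges as vertices has at most one cycle: such a graph has either $v-1$ edges, in which case it is a tree, or $v$ edges, in which case fixing a spanning tree $T$ leaves exactly one edge $f$ outside $T$, and every cycle must consist of $f$ together with the unique path in $T$ joining the endpoints of $f$, so there is exactly one cycle. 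Hence every component of $G$ has at most one cycle, i.e. $G$ is a pseudoforest.

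For the forward implication, I would orient each connected component $C$ of the pseudoforest $G$ independently. If $C$ is a tree, root it at an arbitrary vertex $r$ and orient every edge from the endpoint farther from $r$ towards the endpoint nearer to $r$; then every vertex other than $r$ has outdegree one and $r$ has outdegree zero. If $C$ contains a cycle $Z$ (necessarily unique, as $C$ is a component of a pseudoforest), pick an edge $e = xy$ of $Z$; then $C - e$ is a tree, so I root it at $x$ and orient its edges towards $x$ as above, and finally orient $e$ from $x$ to $y$. Now every vertex of $C$ other than $x$ has outdegree one coming from $C - e$, while $x$ has outdegree zero in $C - e$ and outdegree one via $e$. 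In either case all outdegrees are at most one, and the union of these orientations over all components is the desired orientation of $G$.

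There is essentially no obstacle: the statement is routine, and the only point deserving an explicit sentence is the structural fact used in the reverse direction, namely that a connected graph with $e(C) \le v(C)$ contains at most one cycle, which is handled by the spanning-tree argument above.
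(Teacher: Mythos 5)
Your proof is correct, and since the paper simply states this observation as ``easy and well known'' without proof, your argument is exactly the standard one that is implicitly intended: orient each tree component toward a root, and for a unicyclic component orient the spanning tree obtained by deleting a cycle edge toward one endpoint of that edge and then orient the deleted edge away from it; conversely, outdegree at most one gives $e(C)\le v(C)$ for each component, which forces at most one cycle. No gaps; nothing further is needed.
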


From this observation, we get an important corollary.

\begin{cor}
\label{easycor}
A graph admits a decomposition into $k$ pseudoforests if and only if it admits an orientation such that every vertex has outdegree at most $k$.
\end{cor}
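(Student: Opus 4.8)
The plan is to deduce both directions directly from the preceding Observation, treating the orientation as the bookkeeping device that lets us pass between a decomposition and a bound on outdegrees.

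For the forward direction, suppose $G$ decomposes into pseudoforests $P_{1},\ldots,P_{k}$. By the Observation, each $P_{i}$ admits an orientation $O_{i}$ in which every vertex has outdegree at most one \emph{within $P_{i}$}. I would orient $G$ by orienting each edge $e$ according to the orientation $O_{i}$ of the unique part $P_{i}$ containing $e$. Then for any vertex $v$, its outdegree in this orientation of $G$ equals $\sum_{i=1}^{k} d^{+}_{P_{i}}(v) \leq k$, as desired.

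For the backward direction, suppose $G$ has an orientation in which every vertex has outdegree at most $k$. At each vertex $v$, assign to the (at most $k$) arcs leaving $v$ pairwise distinct colors from $\{1,\ldots,k\}$; since every edge of $G$ is an arc leaving exactly one of its endpoints, this colors each edge exactly once and partitions $E(G)$ into $k$ spanning subgraphs $Q_{1},\ldots,Q_{k}$. By construction, the induced orientation on each $Q_{j}$ has every vertex with outdegree at most one, so by the Observation each $Q_{j}$ is a pseudoforest, giving the required decomposition.

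There is essentially no obstacle here: the statement is a formal consequence of the Observation, and the only point requiring a moment's care is that in the backward direction the per-vertex colouring of outgoing arcs is consistent as a colouring of edges, which holds precisely because each edge is outgoing at exactly one endpoint under a fixed orientation.
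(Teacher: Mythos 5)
Your proof is correct. The paper itself does not prove this corollary but defers to Corollary~1.2 and Theorem~1.1 of the cited Fan--Li--Song--Yang paper; the argument you give (combine the at-most-one-outdegree orientations of the parts for the forward direction, and in reverse partition the at-most-$k$ outgoing arcs at each vertex into $k$ colour classes, observing that each edge is assigned exactly one colour since it is outgoing at exactly one endpoint) is the standard proof and is exactly what that reference establishes.
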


For a proof of Corollary \ref{easycor} we refer the reader to Corollary $1.2$ and Theorem $1.1$ of \cite{pseudoforest}. Alternatively, here is a short proof due to a referee. Given an orientation where each vertex has out degree at most $k$, colour the tails incident to each vertex with distinct colours from $1,\ldots,k$. Now each colour class of edges induces a subgraph with maximum outdegree at most one, hence it is a pseudoforest. For the converse, given $k$ pseudoforests, we orient each vertex to have maximum outdegree at most $1$. The union of these oriented pseudoforests is an orientation of $G$ with maximum outdegree at most $k$.  We will use Corollary \ref{easycor} repeatedly and implicitly throughout our proofs. With that, we can give a proof of Hakimi's Theorem.

\begin{proof}[Proof of Theorem \ref{hakimistheorem}]
We only prove that a graph with maximum average degree $2k$ decomposes into $k$ pseudoforests, as the other direction is trivial. 

Suppose towards a contradiction that $G$ has maximum average degree at most $2k$, but $G$ does not decompose into $k$ pseudoforests. Then $G$ does not admit an orientation such that each vertex has outdegree at most $k$. 

Consider an orientation $\vec{G}$ of $G$ that minimizes the sum
\[ \rho := \sum_{v \in V(G)} \max\{0, d^{+}(v) -k\}.\]
If this sum is zero, then we have a desired decomposition, a contradiction. Thus there is a vertex $v \in V(G)$ such that $v$ has outdegree at least $k+1$. If there is a directed path $P$ from $v$ to $x$ such that $x$ has outdegree at most $k-1$, then we can reverse the directions on all of the arcs on $P$ and obtain a decomposition with smaller $\rho$ value, a contradiction. Consider the subgraph $H$ induced by vertices which are reachable from directed paths of $v$. That is, $x \in V(H)$ if there is a directed path $P$ which starts at $v$ and ends at $x$. Then all vertices in $H$ have outdegree at least $k$, and $v$ has outdegree at least $k+1$. But this implies that the average degree of $H$ is strictly larger than $2k$, a contradiction. 
\end{proof}

Now we will give a high level overview of how our proof will proceed. We will take a pseudoforest decomposition $C_{1},\ldots,C_{t},F$ where we will try and bound the size of each connected component in $F$. In the above proof of Hakimi's Theorem, the bad situation was a vertex which had too large outdegree. Now, the bad situation is that there is a component which is too large. In the proof of Hakimi's Theorem, we searched for special paths to augment on from a vertex which had too large outdegree, and in our proof, we will search for paths to augment on from a component which is too large. In the proof of Hakimi's theorem, we 
identified a situation where we could augment our decomposition and obtain a better decomposition, namely, directed paths from a vertex with too large outdegree to a vertex with small outdegree. In our proof, we will identify similar situations, namely, finding two components which are small enough to augment our decomposition, or finding a large component which has at least two small components nearby to perform augmentations. Then we will show that when these configurations are removed, either we have a decomposition satisfying Theorem \ref{strongpndt} or our graph actually had too large maximum average degree to begin with. 

The paper is structured as follows. In Section \ref{mincounterexample} we  give the necessary definitions on how we will pick our counterexample, and prove basic properties about the counterexample. In Section \ref{flipping}, we describe how we will augment our decomposition in certain situations. In Section \ref{keyarguments}, we show how to use this augmentation strategy to either find an optimal decomposition or show that our graph has too large maximum average degree. 

Finally, to fix some notation we will let $v(G) = |V(G)|$ and $e(G) = |E(G)|$. All other undefined graph theory terminology can be found in \cite{BondyAndMurty}, or any other standard graph theory textbook.

\section{Picking the counterexample}
\label{mincounterexample}
In this section we describe how we will pick our counterexample. Fix $k$ and $d$ as positive integers, and suppose that $G$ is a vertex minimal counterexample to Theorem \ref{strongpndt} for the fixed values of $k$ and $d$.

Our first step will be to obtain desirable orientations of $G$. In particular, the orientations we will demand will imply that $G$ decomposes into $k$ pseudoforests each with $v(G)$ edges, and one left over pseudoforest. For this, we use a lemma proved in \cite{pseudoforest} (Lemma $2.1$). Technically, we need a stronger lemma, however the same proof as Lemma $2.1$ will suffice. We give a proof for completeness sake only, there is no new idea needed.  

\begin{lemma}[\cite{pseudoforest}]
\label{orientationlemma}
If $G$ is a vertex minimal counterexample to Theorem \ref{strongpndt}, then there exists an orientation of $G$ such that for all $v \in V(G)$, we have $k \leq d^{+}(v) \leq k+1$. 
\end{lemma}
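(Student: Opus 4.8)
The plan is to establish the two inequalities separately: $d^+(v)\le k+1$ is essentially immediate, while $d^+(v)\ge k$ is forced by a potential‑function argument on orientations together with the vertex‑minimality of $G$. Since $\mathrm{mad}(G)\le 2k+\frac{2d}{d+k+1}<2(k+1)$, Hakimi's Theorem gives a decomposition of $G$ into $k+1$ pseudoforests, hence by Corollary \ref{easycor} an orientation with all outdegrees at most $k+1$; so it suffices to produce such an orientation in which, in addition, every outdegree is at least $k$.

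To do this I would take, among all orientations $\vec{G}$ of $G$ with maximum outdegree at most $k+1$, one minimising $\rho':=\sum_{v\in V(G)}\max\{0,k-d^+(v)\}$. If $\rho'=0$ we are done, so suppose $B:=\{v:d^+(v)\le k-1\}\neq\emptyset$, and let $R$ be the set of vertices admitting a directed path into $B$ (so $B\subseteq R$). Then two observations drive everything. First, every $u\in R$ has $d^+(u)\le k$: if $d^+(u)=k+1$, then $u\notin B$, and reversing a directed path from $u$ to a vertex of $B$ keeps all outdegrees at most $k+1$ but strictly decreases $\rho'$, contradicting minimality. Second, every edge between $R$ and $V(G)\setminus R$ is oriented out of $R$, since an edge oriented into $R$ would place its tail in $R$. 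Counting out‑arcs, the number of edges incident with $R$ equals $\sum_{u\in R}d^+(u)\le k|R|-|B|\le k|R|-1$, and for every $H\subseteq G[R]$ we have $e(H)=\sum_{u\in V(H)}d_H^+(u)\le k\,v(H)$, so $\mathrm{mad}(G[R])\le 2k$.

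If $R=V(G)$, then $\mathrm{mad}(G)\le 2k$, so by Hakimi's Theorem $G$ is a union of $k$ pseudoforests, and adjoining the empty pseudoforest contradicts that $G$ is a counterexample. So suppose $R\subsetneq V(G)$ and set $R'=V(G)\setminus R\neq\emptyset$; then $|R|,|R'|<|V(G)|$. Put $j(u)=|N_G(u)\cap R'|$ for $u\in R$. Restricting $\vec{G}$ to $G[R]$ gives an orientation in which $d^+(u)=d^+_{\vec{G}}(u)-j(u)\le k-j(u)$ for all $u\in R$ (using the two observations); colouring its out‑arcs so that the out‑arcs at each vertex receive distinct colours from $\{1,\dots,k\}$ yields pseudoforests $T_1,\dots,T_k$ with $\bigcup_i T_i=G[R]$, and each $u\in R$ has outdegree $0$ in at least $j(u)$ of them. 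By vertex‑minimality of $G$, the graph $G[R']$ (which still satisfies the hypothesis) decomposes into $k+1$ pseudoforests $T_1',\dots,T_k',F'$ with every component of $F'$ having at most $d$ edges. Now for each $u\in R$ assign its $j(u)$ edges to $R'$ to $j(u)$ distinct indices $i$ with $d^+_{T_i}(u)=0$, each directed away from $u$, and set $\widehat{T}_i=T_i\cup T_i'\cup\{\text{edges assigned to }i\}$ and $\widehat{F}=F'$. Taking on $\widehat{T}_i$ the orientation that is the colour‑$i$ part of $\vec{G}$ on $G[R]$, an outdegree‑$\le 1$ orientation on $T_i'$, and directs each assigned edge away from its endpoint in $R$, every vertex has outdegree at most $1$ in $\widehat{T}_i$, so $\widehat{T}_i$ is a pseudoforest; thus $\widehat{T}_1,\dots,\widehat{T}_k,\widehat{F}$ is a decomposition of $G$ into $k+1$ pseudoforests with each component of $\widehat{F}$ of size at most $d$, contradicting that $G$ is a counterexample.

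The one point that is not a routine adaptation of the Hakimi‑type argument — and hence what I expect to be the main obstacle — is the choice of orientation on $G[R]$: one must keep the restriction of $\vec{G}$ (equivalently, an orientation respecting the refined out‑degree bounds $k-j(u)$) rather than an arbitrary decomposition of $G[R]$ into $k$ pseudoforests, so that each vertex of $R$ retains exactly enough ``empty'' pseudoforests to reabsorb its edges to $R'$. Once that is in place, the pseudoforest property of each $\widehat{T}_i$ falls out of the outdegree‑$\le 1$ characterisation of pseudoforests with no case analysis on cycles, so the remaining bookkeeping is routine.
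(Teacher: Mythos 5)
Your proof is correct and follows essentially the same approach as the paper's: minimise the potential $\rho' = \sum_v \max\{0,k-d^+(v)\}$ over orientations with maximum outdegree at most $k+1$, use path reversal to show that the set $R$ (the paper's $S'$) of vertices with a directed path to a deficient vertex has all cross‑edges oriented outward and all outdegrees at most $k$, apply vertex‑minimality to $G[V(G)\setminus R]$, and reinsert the cross‑edges into the $k$ blue pseudoforests using the spare outdegree at each vertex of $R$. If anything, your handling of the reinsertion step — explicitly keeping the restriction of the orientation to $G[R]$ and colouring its out‑arcs, rather than invoking Hakimi's Theorem abstractly on $G[S']$ — spells out a point the paper leaves implicit.
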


\begin{proof}
Suppose no such orientation exists. As $G$ has maximum average degree at most $2k + 2$, by Hakimi's Theorem, $G$ admits an orientation so that every vertex has outdegree at most $k+1$. Orient $G$ so that every vertex has outdegree at most $k+1$, and that the sum 
\[\rho := \sum_{v \in V(G)} \max\{0, k - d^{+}(v)\}\]
is minimized. Observe that if $\rho$ is zero, then we have a desirable orientation.

First we claim there is a vertex $v$ with outdegree $k+1$. If not, then all vertices have outdegree at most $k$, and by Hakimi's Theorem $G$ decomposes into $k$ pseudoforests, contradicting that $G$ is a counterexample to Theorem \ref{strongpndt}. 

Now we claim there is no directed path $P$ from a vertex $v$ with outdegree $k+1$ to a vertex $u$ with outdegree at most $k-1$. Suppose towards a contradiction that $P$ is such a path. Then reversing the orientation on all of the arcs in $P$ gives a new orientation, where $v$ has outdegree $k$, all internal vertices have the same outdegree, and the outdegree of $u$ increases by one. But this contradicts that we picked our orientation to minimize $\rho$.

Let $S$ be the set of vertices in $G$ with out degree at most $k-1$, and let $S'$ be the set of vertices which have a directed path to a vertex in $S$. Observe that every vertex in $S'$ has outdegree at most $k$. Let $\bar{S'} = V(G) - S'$. Then every edge with one endpoint lying in $S'$ and one endpoint in $\bar{S'}$ is directed from $S'$ to $\bar{S'}$. Observe that $|\bar{S'}| < v(G)$. 

As $G$ is a vertex minimal counterexample we can decompose $G[\bar{S'}]$ into $k+1$ pseudoforests such that one of the pseudoforests has each connected component containing at most $d$ edges. Additionally, as every vertex in $S'$ has outdegree at most $k$, by Hakimi's Theorem we can decompose $G[S']$ into $k$ pseudoforests $C_{1},\ldots,C_{k}$. Thus we only need to deal with the edges between $\bar{S}'$ and $S'$. Observe that if $v$ has $t$ arcs $(v,u_{1}),\ldots,(v,u_{t})$ where $u_{i} \in \bar{S'}$ for all $i \in \{1,\ldots,t\}$, then $v$ has outdegree at most $k-t$ in $G[S']$. Thus $v$ has outdegree zero in at least $t$ of the pseudoforests $C_{1},\ldots,C_{k}$. Therefore we can add the arcs $(v,u_{1}),\ldots,(v,u_{t})$ to $t$ of the pseudoforests so that the result is a pseudoforest. As all arcs between $S'$ and $\bar{S}'$ are oriented from $S'$ to $\bar{S}'$, we now have a decomposition of $G$ which satisfies Theorem \ref{strongpndt}. But this contradicts that $G$ is a counterexample to Theorem \ref{strongpndt}. 
\end{proof}

 Let $\mathcal{F}$ be the set of orientations of $E(G)$ with $k \leq d^{+}(v) \leq k+1$ for each vertex $v \in V(G)$. A useful way of keeping track of our pseudoforest decomposition will be to colour the edges blue and red, where the edges coloured red will induce a pseudoforest. This will be the pseudoforest where we will want to bound the size of each connected component.  
 
\begin{definition}
Suppose $G$ is oriented such that $k \leq d^{+}(v) \leq k+1$ for each vertex $v \in V(G)$. Then a \textit{red-blue colouring of $G$} is a (non-proper)colouring of the edges where for any vertex $v \in V(G)$, we colour $k$ outgoing arcs of $v$ blue; if after this there is an uncoloured outgoing arc, colour this arc red.  
\end{definition}

Note that given an orientation in $\mathcal{F}$, one can generate many different red-blue colourings. As a graph decomposes into $k$ pseudoforests if and only if it admits an orientation where each vertex has outdegree at most $k$, we obtain the following observation. 

\begin{obs}
\label{redbluecolouringdecomp}
Given a red-blue colouring of $G$, we can decompose our graph $G$ into $k+1$ pseudoforests such that $k$ of the pseudoforests have all of their edges coloured blue, and the other pseudoforest has all of its edges coloured red. 
\end{obs}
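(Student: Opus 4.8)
The plan is to read the decomposition straight off the two colour classes, using Corollary~\ref{easycor} and the characterization of pseudoforests by orientations with small outdegree. First I would fix the orientation in $\mathcal{F}$ underlying the given red-blue colouring, so that every vertex $v$ satisfies $k \le d^{+}(v) \le k+1$ by Lemma~\ref{orientationlemma}. This is exactly what makes the colouring sensible: since $d^{+}(v) \ge k$, there genuinely are $k$ outgoing arcs at $v$ to colour blue, and the possible leftover outgoing arc (which exists precisely when $d^{+}(v) = k+1$) is the one coloured red, so each vertex has at most one outgoing red arc.

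Next I would treat the red subgraph $R$ and the blue subgraph $B$ separately. For $R$, the inherited orientation has every vertex with outdegree at most one, so by the observation that a graph is a pseudoforest if and only if it admits an orientation in which every vertex has outdegree at most one, $R$ is a pseudoforest. For $B$, the inherited orientation has every vertex with outdegree exactly $k$, hence at most $k$, so Corollary~\ref{easycor} gives a decomposition of $B$ into $k$ pseudoforests, each of which has all its edges coloured blue.

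Finally, since $E(G)$ is the disjoint union of its blue and red edges, I would concatenate these $k$ blue pseudoforests with the single pseudoforest $R$ to obtain a decomposition of $G$ into $k+1$ pseudoforests, $k$ of them with all edges blue and one with all edges red, as required. There is no substantive obstacle here; the only point needing care is that the red-blue colouring is defined relative to an orientation from $\mathcal{F}$, which is precisely what guarantees both the ``$k$ blue outgoing arcs at each vertex'' property feeding Corollary~\ref{easycor} and the ``at most one red outgoing arc at each vertex'' property feeding the pseudoforest characterization.
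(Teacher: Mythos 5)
Your proposal is correct and follows exactly the reasoning the paper intends: the observation is stated as an immediate consequence of the orientation characterization (Corollary~\ref{easycor}), noting that the red-blue colouring gives each vertex exactly $k$ blue outgoing arcs and at most one red outgoing arc. The paper gives no separate proof, so your filled-in argument is the canonical one.
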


Observe that one red-blue colouring can give rise to many different pseudoforest decompositions. Given a pseudoforest decomposition obtained from Observation \ref{redbluecolouringdecomp} we will say a pseudoforest which has all arcs coloured blue is a \textit{blue pseudoforest}, and the pseudoforest with all arcs coloured red is the \textit{red pseudoforest}.

\begin{definition}
Let $f$ be a red-blue colouring of $G$, and let $C_{1},\ldots,C_{k},F$ be a pseudoforest decomposition obtained from $f$ by Observation \ref{redbluecolouringdecomp}. Then we say that \textit{$C_{1},\ldots,C_{k},F$ is a pseudoforest decomposition generated from $f$}. We will always use the convention that $F$ is the red pseudoforest, and each $C_{i}$ is a blue pseudoforest. 
\end{definition}

 As $G$ is a counterexample, in every pseudoforest decomposition generated from a red-blue colouring, there is a component of the red pseudoforest which has more than $d$ edges. We define a \textit{residue function} which simply measures how close a decomposition is to satisfying Theorem \ref{strongpndt}.
 
\begin{definition}
Let $f$ be a red-blue colouring and $C_{1},\ldots,C_{k},F$ be a pseudoforest decomposition generated by $f$. Let $\mathcal{T}$ be the set of components of $F$. Then the \textit{residue function}, denoted $\rho$, is
\[\rho(F) = \sum_{K \in \mathcal{T}} \max\{e(K) -d,0\}.\]
\end{definition}

Using a red-blue colouring, and the resulting pseudoforest decomposition, we define an induced subgraph of $G$ which we will focus our attention on. Intuitively, this subgraph should be thought of as an ``exploration" subgraph similar to how in the proof of Hakimi's theorem we ``explored" from a vertex which had too large outdegree. Here we will ``explore" from a component which is too large. 

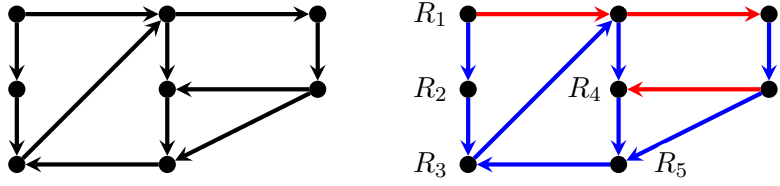
\begin{figure}
\begin{center}
\begin{tikzpicture}
\node[blackvertex] at (0,0) (v1) {};
\node[blackvertex] at (2,0) (v2) {};
\node[blackvertex] at (4,0) (v3) {};

\node[blackvertex] at (4,-1) (v4) {};
\node[blackvertex] at (2,-1) (v5) {};

\node[blackvertex] at (0,-1) (v6) {};

\node[blackvertex] at (0,-2) (v7) {};

\node[blackvertex] at (2,-2) (v8) {};

\draw[ultra thick,black, ->,>=stealth] (v1)--(v2);
\draw[ultra thick,black, ->, >=stealth] (v2)--(v3);
\draw[ultra thick,black, ->, >=stealth] (v1)--(v6);
\draw[ultra thick,black, ->, >= stealth] (v6)--(v7);
\draw[ultra thick,black, ->, >= stealth] (v4)--(v5);
\draw[ultra thick,black, ->, >= stealth] (v3)--(v4);
\draw[ultra thick,black, ->, >= stealth] (v2)--(v5);
\draw[ultra thick,black, ->, >= stealth] (v5)--(v8);
\draw[ultra thick,black,->, >= stealth] (v8)--(v7);
\draw[ultra thick,black,->, >= stealth] (v7)--(v2); 
\draw[ultra thick,black,->, >= stealth] (v4)--(v8);

\begin{scope}[xshift = 6cm]
\node[blackvertex] at (0,0) (v1) {};
\node[blackvertex] at (2,0) (v2) {};
\node[blackvertex] at (4,0) (v3) {};

\node[blackvertex] at (4,-1) (v4) {};
\node[blackvertex] at (2,-1) (v5) {};

\node[blackvertex] at (0,-1) (v6) {};

\node[blackvertex] at (0,-2) (v7) {};

\node[blackvertex] at (2,-2) (v8) {};

\draw[ultra thick,red, ->,>=stealth] (v1)--(v2);
\draw[ultra thick,red, ->, >=stealth] (v2)--(v3);
\draw[ultra thick,blue, ->, >=stealth] (v1)--(v6);
\draw[ultra thick,blue, ->, >= stealth] (v6)--(v7);
\draw[ultra thick,red, ->, >= stealth] (v4)--(v5);
\draw[ultra thick,blue, ->, >= stealth] (v3)--(v4);
\draw[ultra thick,blue, ->, >= stealth] (v2)--(v5);
\draw[ultra thick,blue, ->, >= stealth] (v5)--(v8);
\draw[ultra thick,blue,->, >= stealth] (v8)--(v7);
\draw[ultra thick,blue,->, >= stealth] (v7)--(v2); 
\draw[ultra thick,blue,->, >= stealth] (v4)--(v8);

\node[smallwhite] at (-.5,0) (v9) {$R_{1}$};
\node[smallwhite] at (-.5,-1) (v10) {$R_{2}$};
\node[smallwhite] at (-.5,-2) (v11) {$R_{3}$};
\node[smallwhite] at (2.7,-2) (v12) {$R_{5}$};
\node[smallwhite] at (1.55,-1) (v13) {$R_{4}$};
\end{scope}
\end{tikzpicture}
\end{center}
\caption{
\label{figure1}
In this example we assume $k=1$ and $d=1$. On the left, the orientation is in $\mathcal{F}$. On the right we have one possible red-blue colouring generated by this orientation. Here, the entire graph would be the exploration subgraph, and assuming $R_{1}$ is the root, $(R_{1},R_{2},R_{3},R_{4},R_{5})$ is the smallest legal order. Lastly, the isolated vertices are the small components (and are in fact the only possible small components when $k=1$ and $d=1$)}
\end{figure} 

\begin{definition}
Suppose that $f$ is a red-blue colouring of $G$, and suppose $D= (C_{1},\ldots,C_{k},F)$ is a pseudoforest decomposition generated from $f$. Let $R$ be a component of $F$ such that $e(R) >d$. We define the \textit{exploration subgraph} $H_{f,D,R}$ in the following manner. Let $S \subseteq V(G)$ where $v \in S$ if and only if there exists a path $P = v_{1},\ldots,v_{m}$ such that $v_{m}= v$, $v_{1} \in V(R)$, and either $v_{i}v_{i+1}$ is an arc $(v_{i},v_{i+1})$ coloured blue, or $v_{i}v_{i+1}$ is an arbitrarily directed arc coloured red. Then we let $H_{f,D,R}$ be the graph induced by $S$.
\end{definition}

Given a particular exploration subgraph $H_{f,D,R}$, we say $R$ is the \textit{root component}. We say the \textit{red components} of $H_{f,D,R}$ are the components of $F$ contained in $H_{f,D,R}$. 

It might not be clear why we made this particular definition for $H_{f,D,R}$, however the next observation shows that for any exploration subgraph $H_{f,D,R}$, the red edge density must be low. Before stating the observation, we fix some notation. Given a subgraph $K$ of $G$, we will let $E_{b}(K)$ and $E_{r}(K)$ denote the sets of edges of $K$ coloured blue and red, respectively. We let $e_{b}(K) = |E_{b}(K)|$ and $e_{r}(K) = |E_{r}(K)|$.

\begin{obs}
\label{finishingobservation}
For any red-blue colouring $f$, any pseudoforest decomposition $D$ generated from $f$, and any choice of root component $R$, the exploration subgraph $H_{f,D,R}$ satisfies
\[\frac{e_{r}(H_{f,D,R})}{v(H_{f,D,R})} \leq \frac{d}{d+k+1}.\]
\end{obs}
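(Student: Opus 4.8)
The plan is to count edges in $H := H_{f,D,R}$ by bounding the out-degrees of vertices in a cleverly chosen orientation, exploiting the defining property of the exploration subgraph. First I would observe that by construction every vertex $v \in V(H)$ is reachable from $V(R)$ via a path that uses blue arcs only in their forward direction and red arcs in either direction; consequently, if $v$ has a blue out-arc $(v,u)$, then $u$ is also reachable, so $u \in V(H)$, and likewise both endpoints of every red edge incident to a vertex of $H$ lie in $H$. Thus $H$ is closed under following blue out-arcs and under red edges. This means: every blue arc leaving $V(H)$ in the orientation $\vec G$ actually stays inside $H$, so the blue arcs of $\vec G$ restricted to $V(H)$ give each vertex of $H$ its full blue out-degree. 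Since the colouring assigns exactly $k$ blue out-arcs to each vertex that has out-degree $\geq k$ (and in $\mathcal F$ every vertex has out-degree $k$ or $k+1$), we get $e_b(H) \geq \ldots$ — more precisely I want the reverse: the number of blue edges inside $H$ is \emph{at most} $k \cdot v(H)$, but I actually want a lower bound on $v(H)$ in terms of $e_r(H)$, so let me instead count via the red forest and the total.

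The cleaner route: the red edges of $H$ form a pseudoforest (subgraph of $F$), so $e_r(H) \le v(H)$ trivially, but that is far too weak. Instead I would argue that the red components of $H$ are "large" — each has more than $d$ edges, or is the root component $R$ which has more than $d$ edges. The key point is that the exploration process only enters a red component through a red edge or a blue arc, and once it enters it reaches the entire component (red edges are traversable in both directions); moreover the existence of the root $R$ with $e(R) > d$ and the way components get pulled in should force that \emph{every} red component $K$ of $H$ satisfies $e_r(K) \ge $ something, OR that $H$ contains many vertices relative to its red edges. Concretely, for a red component $K$ with $v(K)$ vertices and $e_r(K)$ red edges (a pseudoforest, so $e_r(K) \le v(K)$), I want $\frac{e_r(K)}{v(K)} \le \frac{d}{d+k+1}$ on average across $H$. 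Writing $e_r(H) = \sum_K e_r(K)$ and $v(H) = \sum_K v(K)$ is not immediate since vertices aren't partitioned by red components — but every vertex of $H$ lies in exactly one red component of $F$ (possibly a trivial one with no red edges), so this partition does hold. Then it suffices to show each red component $K$ appearing in $H$ has $\frac{e_r(K)}{v(K)} \le \frac{d}{d+k+1}$, i.e. $(d+k+1)e_r(K) \le d \cdot v(K)$.

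The main obstacle — and the heart of the argument — will be proving that per-component inequality, which must use minimality of the counterexample $G$ together with the exploration structure: if some red component $K \subseteq H$ were too dense, one should be able to find an augmenting path (reversing blue arcs along a path from $R$ into $K$, recolouring) that strictly decreases the residue $\rho(F)$ or otherwise contradicts the choice of $f$ and $D$; alternatively, a too-dense $K$ would let us contract or delete and apply minimality. I would set up the inequality $(d+k+1)e_r(K) \le d\,v(K)$ and show that its failure yields, via the blue out-degree bookkeeping ($k$ blue out-arcs per vertex all landing inside $H$, plus the red out-arc structure), a subgraph of $G$ of average degree exceeding $2k + \frac{2d}{d+k+1}$, contradicting $\mathrm{mad}(G)$. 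Summing the per-component bounds over all red components partitioning $V(H)$ then gives $e_r(H) = \sum_K e_r(K) \le \frac{d}{d+k+1}\sum_K v(K) = \frac{d}{d+k+1} v(H)$, as desired.
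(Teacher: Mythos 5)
You began on the right track: the closure of $H := H_{f,D,R}$ under blue out-arcs is exactly the idea the paper uses, but you abandoned it for the wrong reason. The closure property does not merely cap $e_b(H)$ from above, and you do not need a lower bound on $v(H)$ in terms of $e_r(H)$; what closure actually gives is the exact equality $e_b(H) = k\,v(H)$. By Lemma \ref{orientationlemma} every vertex has out-degree $k$ or $k+1$ in the chosen orientation, hence exactly $k$ blue out-arcs; closure forces all $k$ of them to land inside $H$; and every blue edge of $H$ is the out-arc of a unique tail in $H$, so $e_b(H) = k\,v(H)$ on the nose. The $\text{mad}$ bound then finishes in one line:
\[
k + \frac{d}{d+k+1} \;\geq\; \frac{\text{mad}(G)}{2} \;\geq\; \frac{e(H)}{v(H)} \;=\; \frac{e_r(H)}{v(H)} + \frac{e_b(H)}{v(H)} \;=\; \frac{e_r(H)}{v(H)} + k,
\]
giving $e_r(H)/v(H) \leq d/(d+k+1)$. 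This is the paper's proof, and you essentially had it in hand.

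The route you pivoted to cannot work. The per-component claim $e_r(K)/v(K) \leq d/(d+k+1)$ for every red component $K$ in $H$ is simply false: the root $R$ has $e(R) > d$ and is a component of a pseudoforest, so $e_r(R)/v(R) \geq (v(R)-1)/v(R)$, which exceeds $d/(d+k+1) < 1$; any large red tree has density near $1$. Lemma \ref{counting} shows that the favourable density bound holds only after bundling each non-small component with its small children, not component by component. You also proposed to prove the per-component bound using minimality of the counterexample and of the legal order, but the Observation is quantified over \emph{any} red-blue colouring, decomposition, and root, so it cannot lean on those extremal choices. And a localized $\text{mad}$ argument fails for the reason your own setup should have flagged: blue out-arcs from a single red component $K$ need not stay inside $K$, so the $e_b = k\,v$ bookkeeping is available for $H$ as a whole but not for $K$ alone.
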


\begin{proof}

Suppose towards a contradiction that 
\[\frac{e_{r}(H_{f,D,R})}{v(H_{f,D,R})} > \frac{d}{d+k+1}.\]

As $H_{f,D,R}$ is an induced subgraph defined by directed paths, and every vertex $v \in V(G)$ has $k$ outgoing blue edges, each vertex in $H_{f,D,R}$ has $k$ outgoing blue edges. Thus,
\[\frac{e_{b}(H_{f,D,R})}{v(H_{f,D,R})} = k.\]

Then we have

\[\frac{\text{mad}(G)}{2} \geq \frac{e(H_{f,D,R})}{v(H_{f,D,R})} = \frac{e_{r}(H_{f,D,R})}{v(H_{f,D,R})} + \frac{e_{b}(H_{f,D,R})}{v(H_{f,D,R})} > k + \frac{d}{d+k+1}.\]

But this contradicts that $G$ has $\text{mad}(G) \leq 2k +\frac{2d}{d+k+1}$. 

\end{proof}

For the entire proof, we will be attempting to show that we can augment a given decomposition in such a way that either we obtain a decomposition satisfying Theorem \ref{strongpndt} or we can find a exploration subgraph $H_{f,D,R}$ which contradicts Observation \ref{finishingobservation}.

As Observation \ref{finishingobservation} allows us to focus only on red edges, it is natural to focus on red components which have small average degree. With this in mind, we define the notion of a \textit{small red component}.

\begin{definition}
Let $C_{1},\ldots,C_{t},F$ be a pseudoforest decomposition generated by a red-blue colouring. Let $K$ be a subgraph of $F$. Then $K$ is a \textit{small red subgraph} if    
\[\frac{e_{r}(K)}{v(K)} < \frac{d}{d+k+1}.\]
If $K$ is connected, we say \textit{$K$ is a small red component}
\end{definition}

In particular, we will be interested in the case when $K$ is connected and a small red component. When $K$ is connected the red subgraph is actually isomorphic to a tree, and we can rewrite the density bound in the definition in a more convenient manner. 

\begin{obs}
\label{AcyclicObservation}
Let $K$ be a connected small red subgraph. Then $K$ is acyclic, and further
\[e_{r}(K) < \frac{d}{k+1}.\]
\end{obs}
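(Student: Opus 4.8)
The statement to prove is Observation~\ref{AcyclicObservation}: if $K$ is a connected small red subgraph, then $K$ is acyclic and $e_r(K) < \frac{d}{k+1}$. The plan is to split this into the two claims, handling acyclicity first since the density rewrite depends on it.

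\emph{Acyclicity.} Recall that $F$ is a pseudoforest, so each connected component of $F$ contains at most one cycle; thus a connected subgraph $K$ of $F$ satisfies either $e(K) = v(K) - 1$ (a tree) or $e(K) = v(K)$ (exactly one cycle). Since all edges of $K$ are red, $e_r(K) = e(K)$. Suppose for contradiction that $K$ is not acyclic, so $e_r(K) = v(K)$. Then $\frac{e_r(K)}{v(K)} = 1$. But the small red subgraph hypothesis gives $\frac{e_r(K)}{v(K)} < \frac{d}{d+k+1} < 1$, a contradiction. Hence $K$ is acyclic, so $e_r(K) = v(K) - 1$.

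\emph{Density rewrite.} Now substitute $v(K) = e_r(K) + 1$ into the defining inequality $\frac{e_r(K)}{v(K)} < \frac{d}{d+k+1}$. This reads $\frac{e_r(K)}{e_r(K) + 1} < \frac{d}{d+k+1}$. Cross-multiplying (both denominators positive) gives $(d+k+1)\, e_r(K) < d\,(e_r(K)+1)$, i.e.\ $(k+1)\, e_r(K) < d$, which rearranges to $e_r(K) < \frac{d}{k+1}$, as desired.

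\emph{Main obstacle.} There is essentially no obstacle here — this is a short bookkeeping lemma. The only point requiring a moment's care is invoking the pseudoforest structure of $F$ to conclude that a connected subgraph has at most one cycle (equivalently $e(K) \le v(K)$), and then ruling out the one-cycle case via the strict density bound; once acyclicity is in hand the rewrite is a one-line manipulation. I would present it in exactly the order above, since the bound $e_r(K) < \frac{d}{k+1}$ is only meaningful once we know $e_r(K) = v(K) - 1$.
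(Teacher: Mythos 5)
Your proof is correct and follows essentially the same route as the paper's: rule out a cycle via the density bound $\frac{e_r(K)}{v(K)} = 1 > \frac{d}{d+k+1}$, then substitute $v(K) = e_r(K)+1$ and cross-multiply. The only cosmetic difference is that you spell out the pseudoforest structure of $F$ slightly more explicitly before invoking the "at most one cycle" fact.
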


\begin{proof}
First, suppose that $K$ is not acyclic. Then $K$ contains exactly one cycle. As $K$ is connected, it follows that 
\[\frac{e_{r}(K)}{v(K)} = 1.\]
But $\frac{d}{k+d+1} < 1$ as $d$ and $k$ are positive integers, and hence $K$ would not be a small red subgraph. Thus we can assume that $K$ is acyclic, so $e(K) = v(K)-1$. Thus

\[\frac{e(K)}{v(K)} = \frac{e(K)}{e(K)+1} < \frac{d}{d+k+1}.\]
Therefore
\[e(K)(d+k+1) < d(e(K) + 1).\]
Simplifying, we see that this is equivalent to 
\[e(K) < \frac{d}{k+1}.\]
\end{proof}

We will want to augment our decomposition, and we will want a measure of progress that our decomposition is improving. Of course, if we reduce the residue function that clearly improves the decomposition. However, this might not always be possible, so we will introduce a notion of a ``legal order" of the red components. This order keeps track of the number of edges in components which are ``close" to the root component, with the idea being that if we can continually perform augmentations to make components ``closer" to the root component have fewer edges without creating any large components, then we eventually reduce the number of edges in the root component, which improves the residue function.   We formalize this in the following manner.  

\begin{definition}
  We call an ordering $(R_{1},\ldots,R_{t})$ of the red components of $H_{f,D,R}$ \textit{legal} if all red components are in the ordering, $R_{1}$ is the root component, and for all $j \in \{2,\ldots,t\}$ there exists an integer $i$ with $1 \leq i < j$ such that there is a blue arc $(u,v)$ such that $u \in V(R_{i})$ and $v \in V(R_{j})$. 
\end{definition} 

Let $(R_{1},\ldots,R_{t})$ be a legal ordering. We will say that $R_{i}$ is a \textit{parent} of $R_{j}$ if $i < j$ and there is a blue arc $(v_{i},v_{j})$ where $v_{i} \in R_{i}$  and $v_{j} \in R_{j}$. In this definition a red component may have many parents. To remedy this, if a red component has multiple parents, we arbitrarily pick one such red component and designate it as the only parent. If $R_{i}$ is the parent of $R_{j}$, then we say that $R_{j}$ is a \textit{child} of $R_{i}$. We say a red component $R_{i}$ is an \textit{ancestor} of $R_{j}$ if we can find a sequence of red components $R_{i_{1}},\ldots,R_{i_{m}}$ such that $R_{i_{1}} = R_{i}$, $R_{j_{m}} = R_{j}$, and $R_{i_{q}}$ is the parent of $R_{i_{q+1}}$ for all $q \in \{1,\ldots,m-1\}$. An important definition is that of vertices witnessing a legal order.
\begin{definition}
Given a legal order $(R_{1},\ldots,R_{t})$, we say a vertex $v$ \textit{witnesses the legal order for $R_{j}$} if there is a blue arc $(u,v)$ such that $u \in R_{i}$ and $v \in R_{j}$ and $i < j$. 
\end{definition}
  Observe that there may be many vertices which witness the legal order for a given red component. More importantly, for every component which is not the root, there exists a vertex which witnesses the legal order. We also want to compare two different legal orders. 

\begin{definition}
\label{defn}
Let $(R_{1},\ldots,R_{t})$ and $(R'_{1},\ldots,R'_{t'})$ be two legal orders. We will say $(R_{1},\ldots,R_{t})$ is \textit{smaller} than $(R'_{1},\ldots,R'_{t})$ if  the sequence $(e(R_{1}),\ldots,e(R_{t}))$ is smaller lexicographically than $(e(R'_{1}),\ldots,e(R'_{t'}))$. 
  
\end{definition}

With Definition \ref{defn}, we will pick our minimal counterexample $G$ in the following manner. 
First, $v(G)$ is minimized. 

After this we pick an orientation in $\mathcal{F}$, a red-blue colouring $f$ of this orientation, a pseudoforest decomposition $D =(C_{1},\ldots,C_{k},F)$ generated from $f$, such that the number of cycles in $F$ is minimized. Subject to this, we minimize the residue function $\rho$. Finally, we pick a smallest possible legal order $(R_{1},\ldots,R_{t})$. 

From here on out, we will assume we are working with a counterexample picked in the manner described. The point of minimizing the number of cycles in $F$ is slightly unintuitive compared to minimizing the residue function and minimizing the legal order. However, we minimize the number of cycles because when we augment we will need to ensure our decomposition is in fact a pseudoforest decomposition, and our augmentations will never create more cycles in $F$. Hence by minimizing the number of cycles in $F$ first, we can easily take care of the cases where cycles occur, which allows us to focus on the more important cases where the components are acyclic.

 It was pointed out to the second author via personal communication by Daqing Yang that one can remove the condition to minimize the number of cycles in $F$ by instead  modifying the definition of legal order to take into account the maximum average degree instead of just the number of edges. This is a nice approach and an argument could be made that this approach is more intuitive. However the proof ends up being fundamentally the same, and the authors prefer this approach.

\section{Augmenting the decomposition}
\label{flipping}

In this section we describe a very simple operation which will mostly be how we augment our decomposition. Let $f$ be the red-blue colouring of our counterexample, and let $C_{1},\ldots,C_{k}$ be the blue pseudoforests, and $F$ the red pseudoforest. Let $(R_{1},\ldots,R_{t})$ be the legal ordering picked for our counterexample. As some notation, given a vertex $x \in V(H_{f,D,R})$, we let $R^{x}$ denote the red component of $H_{f,D,R}$ containing $x$.  
\begin{definition}
Let $(x,y)$ be a blue arc. Further suppose that  $R^{y}$ is acyclic, and suppose that $e = xv$ is an arbitrarily oriented red arc incident to $x$. To \textit{exchange $e$ and $(x,y)$} is to perform the following procedure. First, take the maximal directed red path in $R^{y}$ starting at $y$, say $Q = y,v_{1},\ldots,v_{l}$ where $(v_{i},v_{i+1})$ is a red arc for $i \in \{1,\ldots,l\}$ and $(y,v_{1})$ is a red arc, and reverse the direction of all arcs of $Q$. Second, change the colour of $(x,y)$ to red and reorient $(x,y)$ to $(y,x)$. Finally, change the colour of $e$ to blue, and if $e$ is oriented $(v,x)$, reorient to $(x,v)$. 
\end{definition}

See Figure \ref{exchange} for an illustration. We note that exchanging on an edge $e$ and $(x,y)$ is well-defined. This is because $R^{y}$ is acyclic (and hence a tree), and thus there is a unique maximal directed path in $R^{y}$ which starts at $y$. 

\begin{obs}
\label{flipobservation}
Suppose we exchange the edge $e =xv$ and $(x,y)$. Then the resulting orientation is in $\mathcal{F}$, and the resulting colouring is a red-blue colouring of this orientation.  
\end{obs}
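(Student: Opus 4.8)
The statement to prove is Observation~\ref{flipobservation}: after exchanging $e = xv$ and the blue arc $(x,y)$, the resulting orientation lies in $\mathcal{F}$ and the resulting edge-colouring is a valid red-blue colouring of it.

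\textbf{Overall approach.} The plan is to track exactly which arcs change direction and which change colour, and then verify the two defining conditions separately: first that the new orientation is in $\mathcal{F}$, i.e. every vertex has outdegree between $k$ and $k+1$; and second that the new colouring is a red-blue colouring, i.e. at every vertex exactly $k$ outgoing arcs are blue and any remaining outgoing arc is red. The key observation is that the exchange is engineered so that every vertex's outdegree is preserved, and every vertex's count of outgoing blue arcs is preserved — so both conditions transfer for free from the old orientation/colouring.

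\textbf{Step 1: Outdegrees are unchanged.} I would go vertex by vertex through the arcs touched by the operation. Reversing the maximal red directed path $Q = y, v_1, \ldots, v_l$ inside the tree $R^y$: since $y$ has outdegree $1$ along $Q$ (the arc $(y,v_1)$) and each internal $v_i$ has exactly one incoming and one outgoing arc of $Q$, reversing $Q$ leaves the outdegree of $y, v_1, \ldots, v_{l-1}$ unchanged and... here I need to be careful about the endpoint $v_l$. Because $Q$ is the \emph{maximal} directed red path starting at $y$, $v_l$ has no outgoing red arc in $R^y$; and since $R^y$ is a component of the red pseudoforest $F$, $v_l$ has no outgoing red arc at all. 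After reversal $v_l$ gains the outgoing arc $(v_l, v_{l-1})$, so its outdegree would go up by one — this must be compensated. The compensation is the recolouring/reorientation of $(x,y)$ and $e$: the arc $(x,y)$ becomes $(y,x)$, so $x$ loses an outgoing arc and $y$ gains one; the arc $e = xv$ becomes oriented $(x,v)$ if it was $(v,x)$. Actually the cleaner way to see it: $x$ loses outgoing arc $(x,y)$ but, if $e$ was oriented $(v,x)$, gains outgoing arc $(x,v)$; if $e$ was already $(x,v)$, then $x$ keeps it — but then we must check $y$'s bookkeeping resolves. I would lay out the two cases (orientation of $e$) and in each case confirm the net change in outdegree at $x$, $y$, $v$, and the path-endpoint $v_l$ is zero, using that $x$ had an outgoing red arc $e$ and an outgoing blue arc $(x,y)$ before the exchange.

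\textbf{Step 2: Outgoing blue counts are unchanged, hence it is a red-blue colouring.} Reversing $Q$ only reorients red arcs and changes no colours, so it does not affect any blue count. The recolouring turns the blue arc $(x,y)$ into a red arc and the red arc $e$ into a blue arc. At $x$: it loses one outgoing blue arc ($(x,y)$) but, whichever way things land, the net count of outgoing blue arcs at $x$ stays $k$ — I would check this against the two sub-cases for $e$ and the fact that before the exchange $x$ had $k$ outgoing blue arcs and the outgoing red arc $e$. At all other vertices the number of outgoing blue arcs is untouched. So every vertex still has exactly $k$ outgoing blue arcs, and since outdegrees are still between $k$ and $k+1$, any leftover outgoing arc is red — matching the definition of a red-blue colouring. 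I would also remark that the red arcs still form a pseudoforest (each vertex has $\le 1$ outgoing red arc), which is implicit once the outdegree/colour counts check out, and that $F$'s number of cycles does not increase — but strictly for this Observation only the $\mathcal{F}$-membership and red-blue validity are claimed.

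\textbf{Main obstacle.} The only real subtlety — and the thing I'd be most careful about — is the endpoint $v_l$ of the reversed red path and the interaction between the three arc modifications; it is an easy case analysis on the original orientation of $e$ ($(x,v)$ versus $(v,x)$) and on whether $e$ itself happens to lie on $Q$ or whether $x \in V(R^y)$, which the hypotheses (that $(x,y)$ is a \emph{blue} arc and $R^y$ acyclic) should rule out or render harmless. Everything else is pure bookkeeping of $\pm 1$'s, so I expect the write-up to be short once those cases are pinned down.
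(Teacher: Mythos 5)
Your overall plan --- vertex-by-vertex bookkeeping of outdegrees and outgoing blue counts, with a case split on the orientation of $e$ and on whether $Q$ is trivial --- is the same as the paper's. But the central claim driving your Step~1, that the exchange is ``engineered so that every vertex's outdegree is preserved'' and that ``the net change in outdegree at $x$, $y$, $v$, and the path-endpoint $v_l$ is zero,'' is false, and this is not something you would patch by pinning down cases: in \emph{every} case exactly two vertices change outdegree. If $Q$ is non-trivial, $v_l$ gains $(v_l,v_{l-1})$ and none of the other modifications touch $v_l$, so its outdegree rises from $k$ to $k+1$; if $Q$ is trivial, $y$ gains $(y,x)$ without losing any $(y,v_1)$, so $y$ rises from $k$ to $k+1$. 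Dually, whichever of $x,v$ had $e$ as an outgoing arc loses one outgoing arc on net and drops from $k+1$ to $k$. You correctly observe that $v_l$ had no outgoing red arc, but then go looking for a compensation of its $+1$ that does not exist; the right move is to use that observation to conclude $v_l$ had outdegree exactly $k$, so $k+1$ is still within $\mathcal{F}$.

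This is precisely what the paper's proof does: it never asserts that outdegrees are preserved, only that they remain in $\{k,k+1\}$, and it derives this from the maximality of $Q$ (forcing the increasing vertex to have been at outdegree $k$) and from $e$ being an outgoing red arc of the decreasing vertex (forcing it to have been at $k+1$). Your Step~2 --- that every vertex retains exactly $k$ outgoing blue arcs --- is correct and mirrors the second half of the paper's argument, but as written it relies on the broken Step~1 to infer that the leftover outgoing arc is red. A self-contained repair is to show directly that after the exchange every vertex has exactly $k$ outgoing blue arcs and at most one outgoing red arc; both the $\mathcal{F}$-membership and the red-blue property then follow at once, without ever claiming outdegrees are fixed.
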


\begin{proof}

Let us first check the outdegrees of vertices after the exchange. Let $Q = y,v_{1},\ldots,v_{l}$ be the maximal directed red path in $R^{y}$ before exchanging the edges. First suppose that $Q$ is not just $y$. Then all of the internal vertices on this path have the same outdegree after reversing as before. On the other hand, the outdegree of $y$ decreases by $1$, and the outdegree of $v_{l}$ increases by one. As $Q$ is a maximal red path, this implies that the outdegree of $v_{l}$ before reversing the arcs on $Q$ was $k$, and hence after reversing the arcs this outdegree is $k+1$. The outdegree of $y$ drops by one after reversing the arcs on $Q$, but we reverse the arc $(x,y)$ to $(y,x)$, and hence the outdegree of $y$ remains the same as before. 

If $Q$ is just $y$, then the outdegree of $y$ before exchanging was $k$, and then as we reorient $(x,y)$ to $(y,x)$, the outdegree of $y$ is now $k+1$. 

Focusing on $x$ now, if $e$ is oriented from $x$ to $v$, then the outdegree of $x$ is initially $k+1$, and after the exchange it ends up being $k$; otherwise, we reorient $(v,x)$ to $(x,v)$ and so the outdegree of $x$ remains the same as before the exchange. Lastly, the outdegree of $v$ remains the same if $e$ was oriented $(x,v)$, and otherwise the outdegree of $v$ initially was $k+1$, and after reorienting becomes $k$. Thus the resulting orientation is in $\mathcal{F}$. 

Now we will see that this new colouring is a red-blue colouring of the orientation. Note after exchanging $e$ and $(x,y)$, $y$ has exactly one outgoing edge coloured red. If $x$ had no outgoing red edge before, it still has no outgoing red edge, and if it did have an outgoing red edge, then the outdegree of $x$ dropped by one, and now $x$ has no outgoing red edge. Finally, if $Q$ was not just $y$, then $v_{l}$ now has no outgoing red edge. It follows that the resulting colouring is in fact a red-blue colouring. 
\end{proof}

\begin{figure}

\begin{tikzpicture}
\node[blackvertex] at (0,0) (v1) {};
\node[smallwhite] at (-.3,0) (v5) {$x$};
\node[blackvertex] at (1.5,0) (v2) {};

\draw[ultra thick, ->, red] (v1)--(v2) {};

\node[blackvertex] at (0,-1.25) (v3) {};
\node[smallwhite] at (-.3,-1.25) (v6) {$y$};
\node[blackvertex] at (1.5,-1.25) (v4) {};
\draw[ultra thick, ->, blue] (v1)--(v3);
\draw[ultra thick, <-, red] (v4) --(v3);
\node[smallwhite] at (.75, .3) (v6) {$e$};

\begin{scope}[xshift = 3cm]

\node[blackvertex] at (0,0) (v1) {};
\node[smallwhite] at (-.3,0) (v5) {$x$};
\node[blackvertex] at (1.5,0) (v2) {};

\draw[ultra thick, ->, blue] (v1)--(v2) {};

\node[blackvertex] at (0,-1.25) (v3) {};
\node[smallwhite] at (-.3,-1.25) (v6) {$y$};
\node[blackvertex] at (1.5,-1.25) (v4) {};
\draw[ultra thick, <-, red] (v1)--(v3);
\draw[ultra thick, ->, red] (v4) --(v3);
\node[smallwhite] at (.75, .3) (v6) {$e$};
\end{scope}
\end{tikzpicture}

\caption{
\protect\label{exchange}
 An example of an exchange on an edge $e$ and $(x,y)$
}

\end{figure}
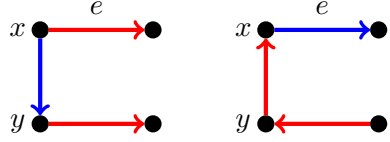

To avoid repetitively mentioning it, we will implicitly make use of Observation \ref{flipobservation}. Now we begin to impose some structure on our decomposition. First we make an observation which allows us to effectively ignore parent components with cycles (such components will still exist, but for the purposes of our argument we will not need to worry about them). 

\begin{obs}
\label{redcyclesaturationlemma}
Let $(x,y)$ be a blue arc such that $R^{x}$ is distinct from $R^{y}$ and further $R^{y}$ is a tree. Then $x$ does not lie in a cycle of $F$. 
\end{obs}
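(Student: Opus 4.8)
The plan is to argue by contradiction, exploiting the fact that our counterexample was chosen \emph{first} to minimise the number of cycles in $F$. So suppose $x$ lies in a cycle of $F$; then the red component $R^{x}$ is unicyclic, it is distinct from $R^{y}$, and $R^{y}$ is a tree by hypothesis.

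First I would pin down the local picture at $x$. Since $R^{x}$ is a connected component of the pseudoforest $F$ that contains a cycle, it has exactly as many edges as vertices, and because the orientation inherited from $\mathcal{F}$ gives every vertex out-degree at most one inside $F$, summing out-degrees over $R^{x}$ forces every vertex of $R^{x}$ — in particular $x$ — to have out-degree exactly one in $F$. Thus $x$ has a unique outgoing red arc $e=(x,v)$, and since the (unique) cycle of $R^{x}$ is then a directed cycle through $x$, the arc $e$ must be an edge of that cycle.

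Next, since $R^{y}$ is a tree, I can exchange $e$ and $(x,y)$; by Observation \ref{flipobservation} the outcome is again an orientation in $\mathcal{F}$ equipped with a red-blue colouring, hence a pseudoforest decomposition $D'$ with red pseudoforest $F'$. I would then simply track the red edge set: the exchange deletes $e$ from the red edges and adds $xy$, while reversing the maximal red path inside $R^{y}$ changes no colours, and all other components of $F$ are untouched. Because $e$ lies on the unique cycle of $R^{x}$, the graph $R^{x}-e$ is a tree on $v(R^{x})$ vertices, and gluing it to the tree $R^{y}$ along the single edge $xy$ (note $R^{x}$ and $R^{y}$ are vertex-disjoint) yields a tree. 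Hence $F'$ has exactly one fewer cycle than $F$. Since $G$ is a counterexample, $F'$ still contains a component with more than $d$ edges, so $D'$ admits a choice of root component and legal order; this provides a valid choice of counterexample data whose red pseudoforest has strictly fewer cycles, contradicting condition (1) in our selection of the counterexample.

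I expect the only delicate points to be the two small structural claims: that the outgoing red arc of $x$ genuinely lies on the cycle of $R^{x}$ (the first step), and that re-attaching $R^{x}-e$ to $R^{y}$ cannot secretly reintroduce a cycle (the edge-counting step). Both are short, but they are exactly where an error would hide, so I would state them cleanly; everything else is a direct invocation of Observation \ref{flipobservation} and the minimality of the counterexample.
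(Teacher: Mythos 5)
Your proof is correct and takes essentially the same route as the paper's: assume $x$ is on a red cycle, exchange a red cycle-edge at $x$ with the blue arc $(x,y)$, observe the resulting red pseudoforest has one fewer cycle, and contradict minimality condition (1). The only difference is that you go to the trouble of showing $x$'s unique outgoing red arc lies on the cycle — the paper sidesteps this by letting $e$ be \emph{any} red cycle-edge incident to $x$, which suffices because the exchange operation accepts an arbitrarily oriented red arc.
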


\begin{proof}
Suppose towards a contradiction that $x$ lies in a cycle of $F$. Let $e$ be an edge incident to $x$ which lies in the cycle coloured red. Now exchange $(x,y)$ and $e$. As $(x,y)$ was an arc between two distinct red components, and $e$ was in the cycle coloured red, after performing the exchange, we reduce the number of cycles in $F$ by one. However, this contradicts that we picked our counterexample to have the fewest number of red cycles.  
\end{proof}

With this we can show that given two components $K$ and $C$, where $K$ is the parent of $C$, and $C$ is acyclic, that $e_{r}(K) + e_{r}(C) \geq d$.

\begin{lemma}
\label{troublesomedonttroublesome}
Let $R^{x}$ and $R^{y}$ be red components such that $R^{y}$ is the child of $R^{x}$, $R^{y}$ does not contain a cycle, and $(x,y)$ is a blue arc from $x$ to $y$. Then $e_{r}(R^{x}) + e_{r}(R^{y}) \geq d$. 
\end{lemma}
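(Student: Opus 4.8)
The plan is to argue by contradiction: suppose $e_{r}(R^{x}) + e_{r}(R^{y}) \le d-1$. First I would observe that $R^{x}$ cannot be the root component $R_{1}$, since by construction $e_{r}(R_{1}) > d$ while $e_{r}(R^{x}) \le d-1$. Hence $R^{x} = R_{j}$ for some $j \ge 2$, so in the legal order $R^{x}$ has a parent $R_{p}$ with $p < j$, witnessed by a vertex $x_{0}\in R^{x}$ and a blue arc $(w,x_{0})$ with $w\in R_{p}$. Also, applying Observation \ref{redcyclesaturationlemma} to the blue arc $(x,y)$ (using $R^{x}\neq R^{y}$ and that $R^{y}$ is a tree), $x$ does not lie on a cycle of $F$, so every red edge incident to $x$ is a cut edge of $R^{x}$; I would pick such an edge $e = xv$ (the degenerate case where $x$ has no incident red edge, i.e. $R^{x}=\{x\}$, being treated separately).

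Next I would perform the exchange of $e$ and $(x,y)$; by Observation \ref{flipobservation} this yields a new orientation in $\mathcal{F}$ together with a new red--blue colouring. Writing $R^{x}_{x}$ and $R^{x}_{v}$ for the components of $R^{x}-e$ containing $x$ and $v$, the new red pseudoforest differs from $F$ only in that $R^{x}$ and $R^{y}$ are replaced by $R' := R^{x}_{x}\cup\{xy\}\cup R^{y}$ and by $R^{x}_{v}$. Two routine checks then suffice: (i) no new red cycle is created, since $xy$ joins the two distinct trees $R^{x}_{x}$ and $R^{y}$ while $e$ leaves $F$, so the number of red cycles does not increase; and (ii) $e_{r}(R') \le e_{r}(R^{x}) + e_{r}(R^{y}) \le d-1$ and $e_{r}(R^{x}_{v}) \le e_{r}(R^{x}) - 1$, both less than $d$, while $R^{x}$ and $R^{y}$ already contributed $0$ to $\rho$; hence $\rho$ does not increase. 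So it remains to exhibit a legal order of the new decomposition that is lexicographically smaller than $(e(R_{1}),\dots,e(R_{t}))$, contradicting the minimality in the choice of our counterexample.

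The choice of $e$ is where the two vertices $x$ and $x_{0}$ interact. If $x_{0}\neq x$, I would take $e$ to be the first edge of the $x$--$x_{0}$ path in the tree $R^{x}$, so that $x_{0}\in R^{x}_{v}$. Then in the new decomposition the component containing $x_{0}$ is exactly $R^{x}_{v}$, it still receives the blue arc $(w,x_{0})$ from the untouched component $R_{p}$, and $e_{r}(R^{x}_{v}) \le e_{r}(R^{x})-1 < e_{r}(R^{x})$; so, keeping $R_{1},\dots,R_{j-1}$ as a prefix (they and the blue arcs among them are unchanged and remain in the exploration subgraph), we may place $R^{x}_{v}$ in the $j$-th slot, producing a legal order strictly smaller at coordinate $j$ --- the desired contradiction. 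If instead $x_{0}=x$, then $x_{0}\in R'$, so $R'$ must occupy the $j$-th slot; here I would choose $e$ so as to maximise $e_{r}(R^{x}_{v})$, note that $e_{r}(R')= e_{r}(R^{x})+e_{r}(R^{y})-e_{r}(R^{x}_{v}) \le e_{r}(R^{x})$, and, in the case of equality, try to squeeze out a strict lexicographic drop from the fact that the old slot of $R^{y}$ (which was after slot $j$) is now occupied by the no-larger component $R^{x}_{v}$, again invoking the minimality of the legal order we started with.

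The main obstacle is precisely this last sub-case $x_{0}=x$: when $x$ simultaneously carries the parent blue arc into $R^{x}$ and the given blue arc out of it, the exchanged-in component $R'$ can a priori be exactly as large as $R^{x}$, so one cannot shrink slot $j$, and the strict decrease of the legal order must instead be extracted from the global rearrangement of the remaining red components; making that accounting airtight (and disposing of the degenerate $R^{x}=\{x\}$ case) is the crux. Everything else --- the legality of the exchange, the cycle count, and the residue bound --- is bookkeeping built directly on Observations \ref{redcyclesaturationlemma} and \ref{flipobservation}.
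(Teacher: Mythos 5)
Your Case 1 (where the vertex $x_0$ that determines the legal order for $R^x$ is distinct from $x$) matches the paper's proof exactly: pick $e$ to be the first edge on the $x$--$x_0$ path in $R^x$, exchange $e$ and $(x,y)$, and observe that the component containing $x_0$ shrinks while the residue does not increase, giving a smaller legal order. You are also right to be suspicious of your own Case 2: it does not work as sketched, and this is where the real gap lies. Choosing $e$ incident to $x$ to maximise $e_r(R^x_v)$ still permits $e_r(R') = e_r(R^x) + e_r(R^y) - e_r(R^x_v)$ to exceed $e_r(R^x)$; for instance, if $R^x$ is a star centred at $x$, every choice of $e$ gives $e_r(R^x_v) = 1$, so any nonempty $R^y$ forces $e_r(R') \ge e_r(R^x)$. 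The fall-back idea of ``recovering the drop from the old slot of $R^y$'' cannot be made rigorous either, since a legal order is not anchored to fixed slots --- once the prefix $R_1,\dots,R_{j-1}$ is fixed, the minimality of the original order already forces every eligible $j$-th entry to have at least $e(R_j)$ edges, so there is nothing to gain there. The degenerate case $R^x = \{x\}$ forces Case 2 and is equally stuck.

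The paper resolves Case 2 by an entirely different move. Instead of exchanging locally between $R^x$ and $R^y$, it follows the parent chain upward from $R^x$ to an ancestor $R^{x_n}$ with at least one red edge (such an ancestor exists since the root has more than $d$ edges), obtaining a blue directed path $P = x_n,\dots,x_1,x,y$ with $x_i \in R^{x_i}$. It then colours $(x,y)$ red, reverses all arcs on $P$, picks a red edge $e$ at $x_n$, colours $e$ blue, and orients it away from $x_n$. This is the ``long'' analogue of the exchange (justified by the same argument as Observation \ref{flipobservation}); it shrinks the ancestor $R^{x_n}$ by one edge, and since $R^{x_n}$ sits earlier in the legal order, taking the same prefix up to $R^{x_n}$ yields a strictly smaller legal order. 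This global path-reversal idea --- not any refinement of your local exchange --- is the missing ingredient you would need to complete Case 2.
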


\begin{proof}
Suppose towards a contradiction that $e_{r}(R^{x}) + e_{r}(R^{y}) < d$. Hence $e_{r}(R^{x}) < d$. Thus $R^{x}$ is not the root.  Let $w$ be a vertex which witnesses the legal order for $R^{x}$ ($w$ exists as $R^{x}$ is not the root). By Observation \ref{redcyclesaturationlemma} we know that $x$ does not belong to a cycle of $R^{x}$.

\textbf{Case 1:} $w \neq x$.

Let $e$ be the edge incident to $x$ in $R^{x}$ such that $e$ lies on the path from $x$ to $w$ in $R^{x}$. Then exchange $(x,y)$ and $e$. As $e_{r}(R^{x}) + e_{r}(R^{y}) < d$, all resulting red components have fewer than $d$ edges, and hence we do not increase the residue function. Furthermore, we claim we can find a smaller legal order. Let $R_{i}$ be the component in the legal order corresponding to $R^{x}$. Then consider the new legal order where the components $R_{1},\ldots,R_{i-1}$ remain in the same position, we replace $R_{i}$ with the new red component containing $w$, and then complete the order arbitrarily. By how we picked $e$, $e(R^{w})$ is strictly smaller than $e(R_{i})$, and hence we have found a smaller legal order, a contradiction. 

\textbf{Case 2:} $w =x$.

We refer the reader to Figure \ref{Case2} for an illustration. 
As $R^{x}$ is not the root component, let $R^{x_{1}}$ be the closest ancestor of $R^{x}$ such that $e(R^{x_{1}}) \geq 1$ (there is an ancestor with this property, as the root has at least one edge). Let $R^{x_{n}},R^{x_{n-1}},\ldots,R^{x_{1}}$ be a sequence of red components such that for $i \in \{2,\ldots,n\}$, $R^{x_{i}}$ is the child of $R^{x_{i-1}}$ and $R^{x}$ is the child of $R^{x_{n}}$. Up to relabelling the vertices,  there is a path $P = x_{1},\ldots,x_{n},x,y$ such that $(x_{i},x_{i+1})$ is an arc coloured blue, and $(x_{n},x)$ is an arc coloured blue. Let $e$ be a red edge incident to $x_{n}$. Now do the following. Colour $(x,y)$ red, and reverse the direction of all arcs in $P$. Colour $e$ blue, and orient $e$ away from $x_{1}$.  By the argument in our proof of  Observation \ref{flipobservation}, the resulting orientation is in $\mathcal{F}$ and the colouring described is a red-blue colouring. Furthermore as $e_{r}(R^{x}) + e_{r}(R^{y}) < d$, all resulting red components have at most $d$ edges, and hence the residue function did not increase (in the event that $R^{x_{1}}$ is the root, the residue function strictly decreases, so we assume that $R^{x_{1}}$ is not the root). Finally, we can find a smaller legal order in this orientation, as we simply take the same legal order up to the component containing $x_{1}$, and then complete the remaining order arbitrarily. As the component containing $x_{1}$ has at least one fewer edge now, this order is a smaller legal order, a contradiction.
\end{proof}

\begin{figure}
\begin{tikzpicture}

\node[blackvertex] at (0,0) (v1) {};
\node[blackvertex] at (2,0) (v2) {};
\node[blackvertex] at (4,0) (v3) {};

\node[blackvertex] at (4,-1) (v4) {};
\node[blackvertex] at (2,-1) (v5) {};

\node[blackvertex] at (0,-1) (v6) {};

\node[blackvertex] at (0,-2) (v7) {};

\node[blackvertex] at (2,-2) (v8) {};

\draw[ultra thick,red, ->,>=stealth] (v1)--(v2);
\draw[ultra thick,red, ->, >=stealth] (v2)--(v3);
\draw[ultra thick,blue, ->, >=stealth] (v1)--(v6);
\draw[ultra thick,blue, ->, >= stealth] (v6)--(v7);
\draw[ultra thick,red, ->, >= stealth] (v4)--(v5);
\draw[ultra thick,blue, ->, >= stealth] (v3)--(v4);
\draw[ultra thick,blue, ->, >= stealth] (v2)--(v5);
\draw[ultra thick,blue, ->, >= stealth] (v5)--(v8);
\draw[ultra thick,blue,->, >= stealth] (v8)--(v7);
\draw[ultra thick,blue,->, >= stealth] (v7)--(v2); 
\draw[ultra thick,blue,->, >= stealth] (v4)--(v8);

\node[smallwhite] at (-.5,0) (v9) {$R_{1}$};
\node[smallwhite] at (-1,-1) (v10) {$R_{2}$};
\node[smallwhite] at (-.5,-1) (v14) {$x$};
\node[smallwhite] at(-.5,-2) (v15) {$y$};
\node[smallwhite] at (-1,-2) (v11) {$R_{3}$};
\node[smallwhite] at (2.7,-2) (v12) {$R_{5}$};
\node[smallwhite] at (1.55,-1) (v13) {$R_{4}$};

\begin{scope}[xshift = 6cm]
\node[blackvertex] at (0,0) (v1) {};
\node[blackvertex] at (2,0) (v2) {};
\node[blackvertex] at (4,0) (v3) {};

\node[blackvertex] at (4,-1) (v4) {};
\node[blackvertex] at (2,-1) (v5) {};

\node[blackvertex] at (0,-1) (v6) {};

\node[blackvertex] at (0,-2) (v7) {};

\node[blackvertex] at (2,-2) (v8) {};

\draw[ultra thick,blue, ->,>=stealth] (v1)--(v2);
\draw[ultra thick,red, ->, >=stealth] (v2)--(v3);
\draw[ultra thick,blue, <-, >=stealth] (v1)--(v6);
\draw[ultra thick,red, <-, >= stealth] (v6)--(v7);
\draw[ultra thick,red, ->, >= stealth] (v4)--(v5);
\draw[ultra thick,blue, ->, >= stealth] (v3)--(v4);
\draw[ultra thick,blue, ->, >= stealth] (v2)--(v5);
\draw[ultra thick,blue, ->, >= stealth] (v5)--(v8);
\draw[ultra thick,blue,->, >= stealth] (v8)--(v7);
\draw[ultra thick,blue,->, >= stealth] (v7)--(v2); 
\draw[ultra thick,blue,->, >= stealth] (v4)--(v8);

\end{scope}
\end{tikzpicture}
\caption{
\protect\label{Case2}
 An illustration of Case $2$ in Lemma \ref{troublesomedonttroublesome}, where $k=1$ and $d=1$ and $x =w$. A vertex in the root component was the ancestor,  so in this case we reduce the residue function. }
\end{figure}
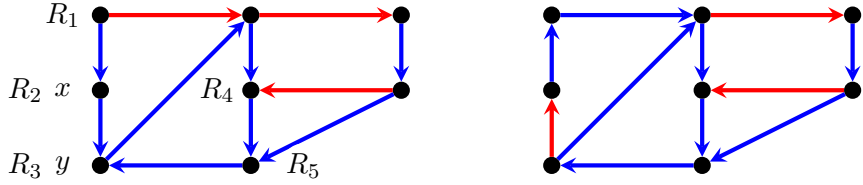

We note the following important special case of Lemma \ref{troublesomedonttroublesome}, that small red components do not have small red children. 

\begin{cor}
\label{smallchildrencorollary}
If $K$ is a small red component, then $K$ does not have any small red children. 
\end{cor}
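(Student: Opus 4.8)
The plan is to get a contradiction immediately from Lemma~\ref{troublesomedonttroublesome}, using the sharp edge bound for connected small red components recorded in Observation~\ref{AcyclicObservation}. So I would suppose, for contradiction, that $K$ is a small red component that does have a small red child, say $C$.

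The first step is to extract what "small red component" buys us. Since both $K$ and $C$ are connected small red subgraphs, Observation~\ref{AcyclicObservation} applies to each: both are acyclic (in particular $C=R^{y}$ contains no cycle, which is exactly the hypothesis needed later), and $e_{r}(K) < \frac{d}{k+1}$ and $e_{r}(C) < \frac{d}{k+1}$.

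The second step is to invoke Lemma~\ref{troublesomedonttroublesome}. Because $C$ is a child of $K$, the definition of the parent relation gives a blue arc $(x,y)$ with $x \in V(K)$ and $y \in V(C)$; together with the fact that $R^{y}=C$ is acyclic, Lemma~\ref{troublesomedonttroublesome} yields $e_{r}(K) + e_{r}(C) \geq d$. But summing the two bounds from the previous step gives $e_{r}(K) + e_{r}(C) < \frac{2d}{k+1} \leq d$, where the final inequality holds since $k \geq 1$. This contradicts the bound from Lemma~\ref{troublesomedonttroublesome}, so $K$ has no small red child.

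I do not expect a genuine obstacle here; this is meant to be a one-line consequence of the preceding lemma. The only points to be careful about are (i) confirming that "$C$ is a small red component" legitimately supplies the acyclicity hypothesis of Lemma~\ref{troublesomedonttroublesome} via Observation~\ref{AcyclicObservation}, and (ii) checking the boundary case $k=1$, where $\frac{2d}{k+1}=d$ but the summed bound is still \emph{strict}, so the contradiction with $e_{r}(K)+e_{r}(C)\geq d$ persists.
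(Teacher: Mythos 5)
Your proof is correct and is essentially identical to the paper's: both invoke Observation~\ref{AcyclicObservation} to get $e_{r}(K), e_{r}(C) < \frac{d}{k+1}$, sum to get a quantity strictly less than $\frac{2d}{k+1} \leq d$, and contradict Lemma~\ref{troublesomedonttroublesome}. Your explicit check that Observation~\ref{AcyclicObservation} also supplies the acyclicity hypothesis of Lemma~\ref{troublesomedonttroublesome} is a nice bit of extra care, but it is the same argument.
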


\begin{proof}
Suppose towards a contradiction that $K$ has a small red child $C$. As $K$ is small, then $e_{r}(K) < \frac{d}{k+1}$. Similarly, $e_{r}(C) < \frac{d}{k+1}$. But then $e_{r}(K) + e_{r}(C) < \frac{2d}{k+1} \leq d$, contradicting Lemma \ref{troublesomedonttroublesome}.
\end{proof}

Now we will show that every red component has at most $k$ small red children. 

\begin{lemma}
\label{boundingthenumberofsmallchildren}
If $K$ is a red component, then $K$ has at most $k$ small children. 
\end{lemma}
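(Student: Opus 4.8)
The plan is to argue by contradiction: suppose some red component $K$ has at least $k+1$ small red children $C_0, C_1, \ldots, C_k$, each joined to $K$ by a blue arc directed from a vertex of $K$ into the child. The first step is to record, for each child $C_i$, a blue arc $(x_i, y_i)$ with $x_i \in V(K)$, $y_i \in V(C_i)$; note the $y_i$ lie in distinct components so they are distinct vertices, but the tails $x_i$ need not be distinct. The natural idea is to exchange one of the arcs $(x_i, y_i)$ against a red edge incident to $x_i$ in $K$ (using that $K$ is a tree when $K$ is not the root — and if $K$ has a cycle, Observation \ref{redcyclesaturationlemma} forbids such a blue arc out of $K$, so $K$ is acyclic here). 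Since each $C_i$ is small, $e_r(C_i) < \frac{d}{k+1}$ by Observation \ref{AcyclicObservation}, so merging $K$ with any one child along its blue arc produces a red component with $e_r(K) + e_r(C_i) + 1$ edges; the point will be to show this stays below the residue-relevant threshold and simultaneously yields a smaller legal order (or fewer cycles, or smaller residue), contradicting minimality.

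The key counting step is this: the $k+1$ tails $x_0, \ldots, x_k$ all lie in $V(K)$, and each contributes one \emph{outgoing} blue arc leaving $K$. But inside $H_{f,D,R}$ every vertex has exactly $k$ outgoing blue arcs in total, so a single vertex of $K$ can be the tail of at most $k$ of the blue arcs to the children. That alone doesn't immediately give a contradiction since $K$ may have many vertices. Instead I would use the legal order more carefully: among the children $C_0, \ldots, C_k$, reorder so that (say) $C_0$ is the one appearing latest in the legal order, or the one whose defining blue arc can be rerouted most cheaply. The heart of the argument is to combine the blue arcs from $K$ to the $C_i$ with a red path structure inside $K$: pick a red edge $e$ of $K$ incident to $x_0$ that lies on the path in $K$ toward the vertex $w$ determining $K$'s legal order (as in Case 1 of Lemma \ref{troublesomedonttroublesome}); exchanging $(x_0,y_0)$ and $e$ splits $K$ into two red pieces and attaches $C_0$ to the $x_0$-side. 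I then need to check that \emph{all} resulting red components are small enough — here is where "$k$ small children" versus "$k+1$" matters, because the surviving children $C_1, \ldots, C_k$ must remain validly attached in a legal order and the new component containing $w$ must be strictly smaller than the old $K$.

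The main obstacle I anticipate is bookkeeping the legal order after the exchange: when $K$ is split and recombined, the children $C_1, \ldots, C_k$ that were attached to $K$ need new parents, and I must verify a legal order still exists with a lexicographically smaller edge-count sequence. The clean way is probably: since $K$ has $\geq k+1$ small children but each vertex of $K$ emits only $k$ blue arcs, there is some child $C_0$ whose blue arc $(x_0, y_0)$ has a tail $x_0$ that is \emph{not} the vertex determining $K$'s own legal order and is not "needed" to keep the other children reachable; exchanging on $C_0$ then only lengthens the $C_0$-branch while shrinking the component that held $w$, giving the smaller legal order exactly as in Lemma \ref{troublesomedonttroublesome}. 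If $K$ is the root, the argument is even easier: merging in any small child along its blue arc and reversing a red path to the root reduces $e(R_1) = e(K)$... actually it \emph{increases} $e(K)$, so instead one reverses the blue arc so the extra edge lands on the child side and then one must check the residue; the root case likely needs the observation that a root with $k+1$ small children still has $e_r$ bounded, letting us reroute to reduce residue. I would isolate the root case separately and handle the non-root case by the legal-order argument sketched above.
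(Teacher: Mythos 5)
Your proposal misses the structural heart of the paper's argument, and the gap is real.

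You correctly notice that each vertex of $K$ has exactly $k$ outgoing blue arcs, so no single vertex of $K$ can be the tail of all $k+1$ blue arcs to the small children. But you then write ``that alone doesn't immediately give a contradiction'' and set the observation aside. In fact, this is exactly the pigeonhole that the paper uses, not to finish the argument directly, but to extract two small children $T_1, T_2$ whose connecting blue arcs $(x,x')$ and $(y,y')$ have \emph{distinct tails} $x \neq y$ in $V(K)$. Everything else in the paper's proof hinges on having these two distinct attachment points: one takes the path $P_{x,y}$ inside $K$, lets $e_x$ and $e_y$ be its endpoint edges, and splits $K$ into $K_x$ (the $x$-side of $K - e_y$) and $K_y$ (the $y$-side of $K - e_x$). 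The crucial internal claim is that $e_r(K_x) \leq e_r(T_2)$ and $e_r(K_y) \leq e_r(T_1)$ --- otherwise one exchanges $(y,y')$ with $e_y$ and strictly improves the legal order (or reduces the residue, if $K$ is the root). Combined with $e_r(T_i) < \frac{d}{k+1}$, this forces $e_r(K) \leq e_r(K_x) + e_r(K_y) < d$, so $K$ is not the root and a final exchange toward the vertex $w$ determining $K$'s legal order gives the contradiction.

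Your alternative plan --- exchanging a single child's blue arc against a red edge of $K$ on the path toward $w$, as in Case~1 of Lemma~\ref{troublesomedonttroublesome} --- does not work here because you have no control on $e_r(K)$. That lemma only gives $e_r(K) + e_r(C) \geq d$, which is perfectly consistent with $K$ being enormous; in that case any exchange that merges a child into a large chunk of $K$ risks creating a red component with more than $d$ edges and \emph{increasing} the residue, which destroys the argument. Without first establishing $e_r(K) < d$ (which requires the two-tails pigeonhole and the $K_x,K_y$ claim) you cannot guarantee the residue is controlled. Your root case is similarly hand-waved: you acknowledge that merging along a blue arc increases $e(K)$, propose to ``reverse the blue arc so the extra edge lands on the child side,'' and then say the case ``likely needs'' some additional observation; this is precisely the place where the claim $e_r(K_x) \leq e_r(T_2)$ does the work, and you do not have it. Finally, your assertion that Observation~\ref{redcyclesaturationlemma} makes $K$ acyclic overstates it: the observation only forbids the \emph{tails} $x_i$ from lying on a red cycle of $K$; $K$ itself may still contain a cycle elsewhere, and the paper's proof is careful to only use that $x$ and $y$ are off the cycle.
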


\begin{proof}
Suppose towards a contradiction that $K$ has at least $k+1$ distinct small children. Then by the pigeon-hole principle, there are two distinct small children $C_{1}$ and $C_{2}$ such that there are blue arcs $(x,x')$, $(y,y')$ so that $x \neq y$, $x,y \in V(K)$, $x' \in V(C_{1})$ and $y' \in V(C_{2})$. By Observation \ref{redcyclesaturationlemma} we can assume that neither $x$ nor $y$ lies in a red cycle in $K$. Consider a path $P_{x,y}$ in $K$ from $x$ to $y$ (not directed). Let $e_{x}$ be the edge incident to $x$ in $P_{x,y}$ and $e_{y}$ be the edge incident to $y$ in $P_{x,y}$. Let $K_{x}$ denote the component of $K-e_{y}$ which contains $x$, and let $K_{y}$ denote the component of $K-e_{x}$ which contains $y$. 

\begin{claim}
\label{aclaim}
$e_{r}(K_{x}) \leq e_{r}(T_{2})$ and $e_{r}(K_{y}) \leq e_{r}(T_{1})$. 
\end{claim}

\begin{proof}
By symmetry, we will only show that $e_{r}(K_{x}) \leq e_{r}(T_{2})$. So suppose towards a contradiction that $e_{r}(K_{x}) > e_{r}(T_{2})$. Then exchange on $(y,y')$ and $e_{y}$. As $e_{r}(K_{x}) > e(T_{2})$, the residue function does not increase. Observe that if $K$ is the root, then the residue function will decrease, and that will give a contradiction. Thus we may assume that $K$ is not the root. We claim we can find a smaller legal order. If there is a vertex which witnesses the legal order for $K$ in $K_{x}$, then taking the same legal order up to $K$ and then replacing $K$ with $K_{x}$ gives a smaller legal order. Similarly, if there is no vertex which witnesses the legal order in $K_{x}$, then because $e_{r}(K_{x}) > e_{r}(T_{2})$, taking the same legal order up to $K$ and replacing $K$ with the component containing $y$ of $K-e_{y}$ after the exchange, and filling in the rest of the order arbitrarily gives a smaller legal order. In both cases, this is a contradiction. 
\end{proof}

Note that either $e_{r}(K) \leq e_{r}(K_{x}) + e_{r}(K_{y})$ or $e_{r}(K) \leq e_{r}(K_{x}) + e_{r}(K_{y}) -1$ (the first case occurs if $e_{y} \neq e_{x}$, and the second occurs if  $e_{y} = e_{x}$). Since each $T_{i}$ is a small child, Claim \ref{aclaim} (together with Observation \ref{AcyclicObservation}) implies that
\[e_{r}(K_{x}) + e_{r}(K_{y}) < \frac{d}{k+1} + \frac{d}{k+1} \leq d.\]

Hence, $e_{r}(K) \leq d$.  Thus we can assume that $K$ is not the root. Let $w$ be a vertex which witnesses the legal order. Without loss of generality, we can assume that $w \in V(K_{x})$. Then exchange on $(y,y')$ and $e_{y}$. We do not increase the residue function as $e_{r}(K_{y}) \leq e_{r}(T_{1}) < \frac{d}{k+1}$. However, we can find a smaller legal order by taking the same legal order up to $K$, and replacing $K$ with $K_{x}$, and completing this order arbitrarily. But this contradicts our choice of legal order, a contradiction. 
\end{proof}

We are now in position to prove the theorem. 

\section{Bounding the maximum average degree}
\label{keyarguments}

In this section, we give a counting argument to show that our chosen exploration subgraph has too large average degree. We make the following definition for ease of notation.

\begin{definition}
Let $K$ be a red component, and let  $K_{1},\ldots,K_{q}$ be the small red children of $K$. We will let $K_{C}$ denote the subgraph with vertex set $V(K_{C}) = V(K) \cup V(K_{1}) \cup \cdots \cup V(K_{q})$, that contains all red edges from $K,K_{1},\ldots,K_{q}$. 
\end{definition}

\begin{lemma}
\label{counting}
Let $K$ be a red component which is not small. Then $K_{C}$ is not small. Further, if $e_{r}(K) > d$, then 
\[\frac{e_{r}(K_{C})}{v(K_{C})} > \frac{d}{d+k+1}.\]
\end{lemma}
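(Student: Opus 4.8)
The plan is to prove both statements by a counting argument over the vertices and red edges of $K_C$, leveraging Lemma \ref{troublesomedonttroublesome} (in the form of Corollary \ref{smallchildrencorollary}) and Lemma \ref{boundingthenumberofsmallchildren}. Let $K_1,\ldots,K_q$ be the small red children of $K$, so by Lemma \ref{boundingthenumberofsmallchildren} we have $q \le k$. Since $K$ is not small, $\frac{e_r(K)}{v(K)} \ge \frac{d}{d+k+1}$; write $e_r(K) = \alpha\, v(K)$ with $\alpha \ge \frac{d}{d+k+1}$. For each small child $K_i$ we have, by Observation \ref{AcyclicObservation}, that $K_i$ is a tree, so $v(K_i) = e_r(K_i) + 1$, and moreover $e_r(K_i) < \frac{d}{k+1}$. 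The key point to extract is a good \emph{lower} bound on $e_r(K_i)$ relative to $v(K_i)$: I want to show each small child individually satisfies a density bound that, when amalgamated with $K$, does not pull the ratio below the threshold. Concretely, $\frac{e_r(K_i)}{v(K_i)} = \frac{e_r(K_i)}{e_r(K_i)+1}$, which is increasing in $e_r(K_i)$, so the worst case is when $e_r(K_i)$ is as small as possible.

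The first thing I would do is handle the degenerate but dangerous case where a small child $K_i$ has $e_r(K_i) = 0$, i.e. it is a single vertex: then it contributes $1$ to $v(K_C)$ and $0$ to $e_r(K_C)$, which is exactly the kind of thing that could tank the density. Here I would invoke Lemma \ref{troublesomedonttroublesome}: if $K_i$ is the child of $K$ via a blue arc $(x,y)$ with $y \in V(K_i)$ and $K_i$ acyclic, then $e_r(K) + e_r(K_i) \ge d$, hence $e_r(K) \ge d$ whenever any child has zero edges — and more usefully, I should aggregate: summing the requirement cleverly, or just noting that if $K$ has a child with few edges then $e_r(K)$ is forced to be large. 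Actually the cleanest route: for \emph{each} small child $K_i$, Lemma \ref{troublesomedonttroublesome} gives $e_r(K_i) \ge d - e_r(K)$. Combined with $q \le k$, I would then compute
\[
\frac{e_r(K_C)}{v(K_C)} = \frac{e_r(K) + \sum_{i=1}^q e_r(K_i)}{v(K) + \sum_{i=1}^q (e_r(K_i)+1)}
\]
and show this is at least $\frac{d}{d+k+1}$. The strategy is to split into two regimes: if $e_r(K) \ge d$ then each $e_r(K_i)$ can be bounded below by something, and if $e_r(K) < d$ (so $e_r(K)$ is between $\frac{d}{d+k+1}v(K)$ and $d$, meaning $v(K)$ is small, in fact $v(K) \le d + k$ roughly), I argue directly.

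For the main inequality with $e_r(K) > d$: I expect the heart of it is the inequality, valid because $q \le k$ and each $v(K_i) = e_r(K_i)+1 \le \frac{d}{k+1}$ (so $v(K_i) \le \frac{d}{k+1}$ after noting integrality, or we just keep it as a strict bound), that adding a tree of density less than the current density to a graph whose density exceeds the threshold keeps us above the threshold \emph{provided} the added trees aren't too numerous or too sparse. The rigorous version: it suffices to show $\frac{\sum(e_r(K_i))}{\sum v(K_i)} \ge \frac{d}{d+k+1}$ is \emph{false} in a controlled way that is compensated by the surplus $e_r(K) - \frac{d}{d+k+1}v(K) > d - \frac{d}{d+k+1}v(K)$. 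The mediant inequality tells us $\frac{a+c}{b+e} \ge \frac{d}{d+k+1}$ iff $(d+k+1)(a+c) \ge d(b+e)$, i.e. the surplus of $K$ must cover the deficit of the children: $(d+k+1)e_r(K) - d\,v(K) \ge d\sum v(K_i) - (d+k+1)\sum e_r(K_i) = \sum_i (d(e_r(K_i)+1) - (d+k+1)e_r(K_i)) = \sum_i (d - (k+1)e_r(K_i))$. Since each term $d - (k+1)e_r(K_i) < d$ (indeed $\le d$) and there are at most $k$ of them, the right side is at most $kd$; so it suffices that $(d+k+1)e_r(K) - d\,v(K) \ge kd$, and using $e_r(K) \ge d+1$ (integrality, since $e_r(K) > d$) together with... hmm, I need $v(K)$ controlled.

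\textbf{The main obstacle}, which is where I'd spend the real effort, is exactly this coupling: the surplus $(d+k+1)e_r(K) - d\,v(K)$ and the deficit $\sum_i(d - (k+1)e_r(K_i))$ both depend on quantities ($v(K)$, the individual $e_r(K_i)$) that aren't directly bounded by each other — a large $v(K)$ hurts the surplus. The resolution must come from using Lemma \ref{troublesomedonttroublesome} child-by-child more carefully (each child forces $e_r(K_i) \ge d - e_r(K)$, so if $e_r(K)$ is only slightly more than $d$ the children are forced to have many edges, shrinking their deficit), combined with the structural fact that a non-small $K$ with $e_r(K) > d$ still has $e_r(K) = v(K) - 1$ or $v(K)$ (it's a tree or unicyclic by the pseudoforest property), which pins $v(K)$ to $e_r(K)$ up to $1$. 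Substituting $v(K) \le e_r(K) + 1$ makes the surplus $\ge (d+k+1)e_r(K) - d(e_r(K)+1) = (k+1)e_r(K) - d \ge (k+1)(d+1) - d = kd + k + 1 > kd$, which closes the argument. The ``$K_C$ is not small'' claim when $e_r(K) \le d$ (but $K$ not small) is then the easy residual case, handled by the same mediant computation with the weaker target $\frac{d}{d+k+1}$ and $e_r(K) \ge \frac{d}{d+k+1}v(K)$ together with $v(K) = e_r(K)+1$ or $e_r(K)$; I would present that first as a warm-up.
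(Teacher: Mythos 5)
Your proposal is correct and in substance the same as the paper's proof: both use Lemma \ref{boundingthenumberofsmallchildren} to cap the number of small children at $q \le k$, Lemma \ref{troublesomedonttroublesome} to lower-bound each $e_r(K_i)$ by $\max\{0,\, d - e_r(K)\}$, the pseudoforest structure to write $v(K_i) = e_r(K_i)+1$ and $v(K) \le e_r(K)+1$, and then a calculation. You organize the calculation as a surplus-versus-deficit comparison after clearing denominators, whereas the paper manipulates the fraction $\frac{e_r(K_C)}{v(K_C)}$ directly; this is just bookkeeping, not a different idea. One caution: you dismiss the ``$K_C$ is not small'' half (the case $e_r(K) \le d$) as an easy warm-up, but in your surplus--deficit framing it is actually the less routine half. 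Bounding the deficit crudely by $kd$ is not enough there, since the surplus is only guaranteed to be $\ge 0$ when $K$ is merely non-small; you must feed in $e_r(K_i) \ge d - e_r(K)$ so that the deficit becomes $\le q\bigl((k+1)e_r(K) - kd\bigr)$, and then the comparison against the surplus $(k+1)e_r(K) - d$ splits on the sign of $(k+1)e_r(K) - kd$. The paper's fraction form avoids this sub-split cleanly: one observes $\frac{x+a}{x+a+c}$ is increasing in $a$ for $c>0$ and replaces $q(d - e_r(K))$ by $d - e_r(K)$ using $q \ge 1$, landing exactly on $\frac{d}{d+k+1}$. So: same lemmas, same bounds, same conclusion, slightly heavier arithmetic on your side in the non-strict case.
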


\begin{proof}
First, observe that if $K$ has no small children then $K_{C} = K$ and hence is not small. If $e_{r}(K) >d$, then as $K$ is connected, $v(K) \leq e_{r}(K) +1$ and hence $e_{r}(K)/v(K) \geq (d+1)/(d+2) > d/(d+k+1)$. Thus we can suppose that $K$ has small children  $K_{1},\ldots,K_{q}$. By Lemma \ref{boundingthenumberofsmallchildren}, $q \leq k$.  By Lemma \ref{troublesomedonttroublesome}, we know for every $i \in \{1,\ldots,q\}$ the inequality $e_{r}(K) + e_{r}(K_{i}) \geq d$ holds. As $e_{r}(K_{i}) \geq 0$ for all $i \in \{1,\ldots,q\}$, it follows that $e_{r}(K_{i}) \geq \max\{0,d- e(K)\}$ for all $i \in \{1,\ldots,q\}$. 

Then a quick calculation shows
\begin{align*}
\frac{e_{r}(K_{C})}{v(K_{C})} &= \frac{e_{r}(K) +\sum_{i =1}^{q} e_{r}(K_{i})}{v(K) + \sum_{i =1}^{q} v(K_{i})}\\
 & \geq \frac{e_{r}(K) +\sum_{i =1}^{q} \max\{0,d-e_{r}(K)\}}{e_{r}(K) + 1 + q + \sum_{i=1}^{q} \max\{0,d-e_{r}(K)\}}
 \\ & \geq \frac{e_{r}(K) +\sum_{i =1}^{q} \max\{0,d-e_{r}(K)\}}{e_{r}(K) + 1 + k + \sum_{i=1}^{q} \max\{0,d-e_{r}(K)\}}.
\end{align*}

The first equality is simply applying the definition of $K_{C}$. The first inequality uses that $e_{r}(K_{i}) \geq \max\{0,d-e_{r}(K)\}$, and that as $K_{i}$ is small, $K_{i}$ is a tree, so $v(K_{i}) = e_{r}(K_{i}) +1$. Finally, the second inequality is using that $q \leq k$. Now we split this into two cases based on whether or not $\max\{0,d-e_{r}(K)\}$ is $0$ or $d-e_{r}(K)$.

\textbf{Case 1:} $\max\{0,d-e_{r}(K)\} = 0$.

If $\max\{0,d-e_{r}(K)\} =0$, then $e_{r}(K) \geq d$. Thus it follows that, 
\begin{align*}
\frac{e_{r}(K) +\sum_{i =1}^{q} \max\{0,d-e_{r}(K)\}}{e_{r}(K) + 1 + k + \sum_{i=1}^{q} \max\{0,d-e_{r}(K)\}} &= \frac{e_{r}(K)}{e_{r}(K) + k+1} \\ 
&\geq \frac{d}{d+k+1}.
\end{align*}
Further, if $e_{r}(K) >d$, the above inequality is strict.  

\textbf{Case 2:} $\max\{0,d-e_{r}(K)\} = d-e_{r}(K)$.
As $e_{r}(K) \leq d$, we only need to show that $K$ is not small. Calculating we obtain,
\begin{align*}
\frac{e_{r}(K) +\sum_{i =1}^{q} \max\{0,d-e_{r}(K)\}}{e_{r}(K) + 1 + k + \sum_{i=1}^{q} \max\{0,d-e_{r}(K)\}} &=
\frac{e_{r}(K) +q(d - e_{r}(K))}{e_{r}(K) + q(d-e_{r}(K)) +k+1} \\
&\geq \frac{d - e_{r}(K) +e_{r}(K)}{e_{r}(K) +d - e_{r}(K) +k+1} \\
&= \frac{d}{d+k+1}.
\end{align*}
\end{proof}

Now we finish the proof.  Let $\mathcal{R}$ denote the set of red components of $H_{f,D,R}$ which are not small. By Corollary \ref{smallchildrencorollary} it follows that,
\[V(H_{f,D,R}) = \bigcup_{K \in \mathcal{R}} V(K_{C}).\]

This follows since a small component cannot have a small child. Therefore it follows that:
\[E_{r}(H_{f,D,R}) = \bigcup_{K \in \mathcal{R}} E(K_{C}).\]

Now we bound the maximum average degree of $H_{f,D,R}$. By Lemma \ref{counting}, we have
\begin{align*}
\frac{e_{r}(H_{f,D,R})}{v(H_{f,D,R})} &= \frac{\sum_{K \in \mathcal{R}}e_{r}(K_{C})}{\sum_{K \in \mathcal{R}} v(K_{C})} \\
&> \frac{d}{d+k+1}.
\end{align*}

Here, equality holds in the first line because we chose unique parents for components. The strict inequality follows as $K_{C}$ is not small for any $K \in \mathcal{R}$ by Lemma \ref{counting}, and further the root component satisfies $e(R) >d$. However, this contradicts Observation \ref{finishingobservation}. Theorem \ref{strongpndt} follows. 

\begin{ack}
The authors are extremely thankful to Joseph Cheriyan for numerous helpful discussions about the problem. We thank the anonymous referees for helpful comments in improving the presentation.
\end{ack}

\bibliographystyle{plain}
\bibliography{pseudo}

\begin{thebibliography}{1}

\bibitem{BondyAndMurty}
J.A Bondy and U.S.R Murty.
\newblock {\em Graph Theory}.
\newblock Springer, 2008.

\bibitem{pseudoforest}
Genghua Fan, Yan Li, Ning Song, and Daqing Yang.
\newblock Decomposing a graph into pseudoforests with one having bounded
  degree.
\newblock {\em Journal of Combinatorial Theory, Series B}, 115:72 -- 95, 2015.

\bibitem{hakimithm}
S.L. Hakimi.
\newblock On the degrees of the vertices of a directed graph.
\newblock {\em Journal of the Franklin Institute}, 279(4):290 -- 308, 1965.

\bibitem{ndt}
Hongbi Jiang and Daqing Yang.
\newblock Decomposing a graph into forests: The nine dragon tree conjecture is
  true.
\newblock {\em Combinatorica}, 37(6):1125--1137, Dec 2017.

\bibitem{kostochkaetal}
Seog-Jin Kim, Alexandr~V. Kostochka, Douglas~B. West, Hehui Wu, and Xuding Zhu.
\newblock Decomposition of sparse graphs into forests and a graph with bounded
  degree.
\newblock {\em Journal of Graph Theory}, 74(4):369--391, 2013.

\bibitem{montassier}
Mickael Montassier, Patrice~Ossona de~Mendez, Andr{\'e} Raspaud, and Xuding
  Zhu.
\newblock Decomposing a graph into forests.
\newblock {\em Journal of Combinatorial Theory, Series B}, 102(1):38 -- 52,
  2012.

\bibitem{moore2019approximate}
Benjamin Moore.
\newblock An approximate version of the strong nine dragon tree conjecture,
  2019.
\newblock arXiv:1909.07946.

\bibitem{Nashwilliams}
C.~St.J.~A. Nash-Williams.
\newblock Decomposition of finite graphs into forests.
\newblock {\em Journal of the London Mathematical Society}, s1-39(1):12--12,
  1964.

\bibitem{Yangmatching}
Daqing Yang.
\newblock Decomposing a graph into forests and a matching.
\newblock {\em Journal of Combinatorial Theory, Series B}, 131:40 -- 54, 2018.

\end{thebibliography}

\end{document}